\documentclass[11pt]{amsart}
\usepackage[foot]{amsaddr}

\usepackage[utf8]{inputenc}

\usepackage[unicode=true,pdfusetitle,
 bookmarks=true,bookmarksnumbered=false,bookmarksopen=false,
 breaklinks=false,pdfborder={0 0 0},pdfborderstyle={},backref=false,colorlinks=false]
 {hyperref}

\usepackage{graphicx}
\usepackage{xcolor}

\usepackage{geometry}
\geometry{verbose,tmargin=1in,bmargin=1in,lmargin=1in,rmargin=1in}

\usepackage{amssymb,amsmath}
\usepackage{amsfonts}

\numberwithin{equation}{section}
\numberwithin{figure}{section}

\theoremstyle{plain}
\newtheorem{thm}{\protect\theoremname}[section]
\theoremstyle{definition}
\newtheorem{defn}[thm]{\protect\definitionname}
\theoremstyle{plain}
\newtheorem{lem}[thm]{\protect\lemmaname}
\theoremstyle{remark}
\newtheorem{rem}[thm]{\protect\remarkname}
\theoremstyle{plain}

\theoremstyle{definition}

\theoremstyle{remark}
\newtheorem*{rem*}{\protect\remarkname}
\theoremstyle{definition}
\newtheorem{example}[thm]{\protect\examplename}
\theoremstyle{plain}
\newtheorem{cor}[thm]{\protect\corollaryname}

\providecommand{\definitionname}{Definition}
\providecommand{\lemmaname}{Lemma}
\providecommand{\remarkname}{Remark}
\providecommand{\theoremname}{Theorem}
\providecommand{\propositionname}{Proposition}
\providecommand{\questionname}{Question}
\providecommand{\examplename}{Example}
\providecommand{\corollaryname}{Corollary}

\title{Multivariable pseudospectrum in $C^*$-algebras}

\author{Alexander Cerjan}
\email{awcerja@sandia.gov}
\address{Center for Integrated Nanotechnologies,
Sandia National Laboratories,
Albuquerque, New Mexico 87185, USA}

\author{Vasile Lauric}
\email{vasile.lauric@famu.edu}
\address{Department of Mathematics, 316 Jackson-Davis Hall, 1617 S Martin Luther King Jr. Blvd,
Tallahassee, Florida 32307, USA}

\author{Terry A. Loring }
\email{loring@math.unm.edu}
\address{Department of Mathematics and Statistics,
University of New Mexico,
Albuquerque, New Mexico, 87123, USA}

\date{}

\keywords{Pseudospectrum, noncommutative, linear operator}

\subjclass{47A13, 46L05}

\begin{document}

\maketitle
\thispagestyle{empty}

\begin{abstract}
We look at various forms of spectrum and  associated pseudospectrum that can be defined for noncommuting $d$-tuples of Hermitian elements of a $C^*$-algebra.  The emphasis is on theoretical calculations of examples, in particular for noncommuting pairs and triple of operators on infinite dimensional Hilbert space. In particular, we look at the universal pair of projections in a $C^*$-algebra, the usual position and momentum operators, and triples of tridiagonal operators. 
\end{abstract}

\section{Introduction}

Given  $\mathbb{A}=(A_{1},\dots,A_{d})$,
a noncommuting $d$-tuple of bounded linear Hermitian operators on
Hilbert space, so $A_{j}=A_{j}^{*}\in\mathcal{B}(\mathcal{H})$, there
are many competing notions of a joint spectrum. The one that has been
involved in recent developments in many areas of physics \cite{cerjan2023spectral,cerjan_operator_Maxwell_2022,chadha_prb_2024,Cheng2023,dixon_classifying_2023,fulga_aperiodic_2016,liu_prb_2023,Ochkan2024} is the Clifford
spectrum $\Lambda(\mathbb{A})$. This is a closed, bounded subset
of $\mathbb{R}^{d}$. To define it, we define first the \emph{spectral
localizer }
\begin{equation*}
L_{\boldsymbol{\lambda}}(\mathbb{A})=\sum_{j=1}^{d}(A_{j}-\lambda_{j}I)\otimes\Gamma_{j}.
\end{equation*}
Here $(\Gamma_{1},\dots,\Gamma_{d})$ are matrices that satisfy the
relations
\begin{align*}
\Gamma_{j}^{*} & =\Gamma_{j},\quad(j=1,\dots,d)\\
\Gamma_{j}^{2} & =1,\quad(j=1,\dots,d)\\
\Gamma_{j}\Gamma_{k} & =-\Gamma_{k}\Gamma_{j},\quad(j\neq k)
\end{align*}
that we call a \emph{representation of the Clifford relations}. 

It does not matter what representation we use \cite[Lemma~1.2]{CerjanLoring2023EvenSpheres},
so generally we take an irreducible representation, which means these matrices 
are of size $2^{\left\lfloor d/2\right\rfloor }$. For $d=2$ a standard
choice is 
\begin{equation}
\label{eq:two_gamma_matrices}
\Gamma_{1}=\sigma_{x},\,\Gamma_{2}=\sigma_{y}
\end{equation}
and for $d=3$ standard choice is 
\begin{equation}
\label{eq:three_gamma_matrices}
\Gamma_{1}=\sigma_{x},\,\Gamma_{2}=\sigma_{y},\,\Gamma_{3}=\sigma_{z}.
\end{equation}
Here we are using the Pauli spin matrices
\begin{equation*}
\sigma_{x}=\begin{bmatrix}0 & 1\\
1 & 0
\end{bmatrix},\,\sigma_{y}=\begin{bmatrix}0 & -i\\
i & 0
\end{bmatrix},\,\sigma_{z}=\begin{bmatrix}1 & 0\\
0 & -1
\end{bmatrix}.
\end{equation*}
The \emph{Clifford spectrum} is then 
\begin{equation}
\Lambda(\mathbb{A})=\left\{ \left.\boldsymbol{\lambda}\in\mathbb{R}^{d}\right|L_{\boldsymbol{\lambda}}(\mathbb{A})\text{ is not invertible}\right\} .\label{eq:Def_of_Clfford_spectrum}
\end{equation}
We will see that an interesting subset of this is the \emph{essential
Clifford spectrum}, defined as 
\begin{equation*}
\Lambda^{e}(\mathbb{A})=\left\{ \left.\boldsymbol{\lambda}\in\mathbb{R}^{d}\right|L_{\boldsymbol{\lambda}}(\mathbb{A})\text{ is not Fredholm}\right\} .
\end{equation*}

From a mathematical point of view, there is much that is unknown about the
Clifford spectrum. We are not even sure if it can be the empty set \cite[\S8]{CerjanLoring2023EvenSpheres},
although a lot of evidence says this does not happen. Most of the examples
examined so far have required computer calculations, so most examples
have been finite-dimensional. Here we move on to infinite-dimensional
examples. In most cases, we work inside the Toeplitz algebra, so that
we work with easy $C^{*}$-algebras inside the Calkin algebra to calculate
most of the Clifford spectrum, and then reduce the search for the
rest of the Clifford spectrum to a calculation involving some difference equations.
We are then able to find infinite-dimensional examples where the Clifford spectrum looks
very different from the Clifford spectra that arise from similar examples  finite-dimensions.

If $\mathbb{A}=(A_{1},\dots,A_{d})$, where each  $A_{j}$ is an Hermitian element
of a unital $C^{*}$-algebra $\mathcal{A}$, the definition of Clifford
spectrum still makes sense. Taking the $\Gamma_{j}$ to be matrices in $M_{2^{\left\lfloor d/2\right\rfloor }}(\mathbb{C})$, we treat
$L_{\boldsymbol{\lambda}}(\mathbb{A})$ as an element of $M_{2^{\left\lfloor d/2\right\rfloor }}(\mathcal{A})\cong\mathcal{A}\otimes M_{2^{\left\lfloor d/2\right\rfloor }}(\mathbb{C})$.  We now are treating $I$ as the identity element of $\mathcal{A}$.
As $\Lambda(\mathbb{A})$ is defined in Eq.~\ref{eq:Def_of_Clfford_spectrum},
in term of invertiblity, we find we have spectral permanence. That
is, if $\mathcal{A}$ is a unital $C^{*}$-subalgebra of $\mathcal{B},$
then we get the same result if we compute $\Lambda(\mathbb{A})$ working
in $M_{2^{\left\lfloor d/2\right\rfloor }}(\mathcal{B})$ as we do
if working in $M_{2^{\left\lfloor d/2\right\rfloor }}(\mathcal{A})$.

\section{Symmetries and $*$-homomorphisms}

We collect here some basic lemmas on the Clifford spectrum in $C^*$-algebras.  We already discussed spectral permanence, so we already know the behavior of the Clifford spectrum with respect to an embedding of $C^*$-algebras.

\begin{thm}
Suppose that $\varphi:\mathcal{A}\rightarrow\mathcal{B}$ is a unital $*$-homomorphism between unital $C^{*}$-algebras.
If $A_{1},\dots,A_{d}$ are Hermitian elements of $\mathcal{A}$
then
\begin{equation}
    \label{eqn:Clifford_gets_smaller}
    \Lambda(\varphi(A_{1}),\dots,\varphi(A_{d}))
  \subseteq   \Lambda(A_{1},\dots,A_{d}).
\end{equation}
If $\varphi$ is one-to-one, then the inclusion in Equation~\textup{\ref{eqn:Clifford_gets_smaller}} becomes an equality.
\end{thm}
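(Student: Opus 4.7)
The plan is to reduce everything to the standard fact that unital $*$-homomorphisms send invertibles to invertibles, and that \emph{injective} unital $*$-homomorphisms send noninvertibles to noninvertibles as well (equivalently, they preserve spectrum).

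First I would amplify $\varphi$ to a map on matrix algebras. Setting $n=2^{\lfloor d/2\rfloor}$, the map $\varphi$ induces a unital $*$-homomorphism $\widetilde\varphi := \varphi\otimes\mathrm{id}_{M_n(\mathbb{C})}\colon M_n(\mathcal{A})\to M_n(\mathcal{B})$ that acts entrywise. A direct computation from the definition of the spectral localizer gives
\begin{equation*}
\widetilde\varphi\bigl(L_{\boldsymbol{\lambda}}(\mathbb{A})\bigr)
= \sum_{j=1}^{d}\bigl(\varphi(A_{j})-\lambda_{j}I_{\mathcal{B}}\bigr)\otimes \Gamma_{j}
= L_{\boldsymbol{\lambda}}\bigl(\varphi(A_{1}),\dots,\varphi(A_{d})\bigr),
\end{equation*}
using that $\varphi$ is unital so $\varphi(I_{\mathcal{A}})=I_{\mathcal{B}}$, and that $\varphi(A_j)$ is again Hermitian.

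Next, since every unital $*$-homomorphism preserves invertibility, if $L_{\boldsymbol{\lambda}}(\mathbb{A})$ is invertible in $M_n(\mathcal{A})$ then its image $L_{\boldsymbol{\lambda}}(\varphi(\mathbb{A}))$ is invertible in $M_n(\mathcal{B})$. Taking contrapositives yields the inclusion \eqref{eqn:Clifford_gets_smaller}.

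For the reverse inclusion when $\varphi$ is injective, the key point is that $\widetilde\varphi$ is then also injective (injectivity is preserved under tensoring with $\mathrm{id}_{M_n(\mathbb{C})}$, since $M_n(\mathbb{C})$ is nuclear, or more elementarily because an entrywise nonzero matrix has a nonzero entry which $\varphi$ maps to a nonzero element). An injective unital $*$-homomorphism between unital $C^{*}$-algebras is isometric and preserves the spectrum of every element, so $x\in M_n(\mathcal{A})$ is invertible if and only if $\widetilde\varphi(x)$ is invertible in $M_n(\mathcal{B})$. Applied to $x=L_{\boldsymbol{\lambda}}(\mathbb{A})$, this upgrades the inclusion to an equality. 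The only mild subtlety is the passage from $\varphi$ to $\widetilde\varphi$, both for the formula above and for the preservation of injectivity; once that is in place, the argument reduces to a textbook property of $C^{*}$-morphisms and invertibility.
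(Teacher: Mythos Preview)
Your proof is correct and follows essentially the same approach as the paper: amplify $\varphi$ to $\varphi\otimes\mathrm{id}$, observe that it intertwines the spectral localizers, and use that unital $*$-homomorphisms preserve invertibility. In fact you are more thorough than the paper's own proof, which only spells out the forward inclusion and leaves the injective case to the spectral-permanence remark made earlier in the introduction; your explicit argument that $\widetilde\varphi$ inherits injectivity and hence preserves spectra fills that in cleanly.
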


\begin{proof}
Since
\begin{equation*}
\left(\varphi\otimes I\right)\left(L_{\boldsymbol{\lambda}}\left(A_{1},\dots,A_{d}\right)\right) =
L_{\boldsymbol{\lambda}}\left(\varphi(A_{1}),\dots,\varphi(A_{d})\right),
\end{equation*}
we know that if $L_{\boldsymbol{\lambda}}\left(A_{1},\dots,A_{d}\right)$
is invertible then $L_{\boldsymbol{\lambda}}\left(\varphi(A_{1}),\dots,\varphi(A_{d})\right)$
is also invertible. 
\end{proof}

\begin{cor} \label{cor:unitary_invariance}
If $A_{1},\dots,A_{d}$ in $\mathcal{B}(\mathcal{H}$) are Hermitian
and $U$ is a unitary operator on $\mathcal{H}$ then 
\begin{equation*}
\Lambda(UA_{1}U^{*},\dots,UA_{d}U^{*})=\Lambda(A_{1},\dots,A_{d})
\end{equation*}
and $ $
\begin{equation*}
\Lambda^{\textup{e}}(UA_{1}U^{*},\dots,UA_{d}U^{*})=\Lambda^{\textup{e}}(A_{1},\dots,A_{d}).
\end{equation*}
\end{cor}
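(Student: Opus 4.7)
The plan is to treat both equalities by observing that the spectral localizer transforms very simply under unitary conjugation. Specifically, since $UA_jU^* - \lambda_j I = U(A_j - \lambda_j I)U^*$ for each $j$, one obtains
\begin{equation*}
L_{\boldsymbol{\lambda}}(UA_{1}U^{*},\dots,UA_{d}U^{*})
= (U\otimes I)\, L_{\boldsymbol{\lambda}}(A_{1},\dots,A_{d})\, (U\otimes I)^{*},
\end{equation*}
where $I$ denotes the identity on $\mathbb{C}^{2^{\lfloor d/2\rfloor}}$ and $U \otimes I$ is a unitary on $\mathcal{H}\otimes\mathbb{C}^{2^{\lfloor d/2\rfloor}}$. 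This single identity is the engine of the whole proof.

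For the first equality I would invoke the preceding theorem: the map $\varphi(A)=UAU^{*}$ is a unital $*$-automorphism of $\mathcal{B}(\mathcal{H})$, hence in particular a one-to-one unital $*$-homomorphism, so the theorem gives $\Lambda(UA_{1}U^{*},\dots,UA_{d}U^{*})=\Lambda(A_{1},\dots,A_{d})$. Alternatively, the display above makes this immediate since conjugation by the unitary $U\otimes I$ preserves invertibility.

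For the second equality the invariance of the Clifford spectrum under $*$-isomorphisms does not apply directly, because Fredholmness is defined by invertibility modulo the compacts rather than in $\mathcal{B}(\mathcal{H})\otimes M_{2^{\lfloor d/2\rfloor}}(\mathbb{C})$ itself. Instead I would argue directly from the conjugation identity: since $U\otimes I$ is a unitary (hence bounded invertible) operator, $L_{\boldsymbol{\lambda}}(UA_{1}U^{*},\dots,UA_{d}U^{*})$ and $L_{\boldsymbol{\lambda}}(A_{1},\dots,A_{d})$ have the same kernel dimension, cokernel dimension, and closed-range property up to the unitary intertwiner, so one is Fredholm iff the other is. Equivalently, pass to the Calkin algebra and use that $\mathrm{Ad}(U\otimes I)$ descends to an automorphism of $(\mathcal{B}(\mathcal{H})/\mathcal{K}(\mathcal{H}))\otimes M_{2^{\lfloor d/2\rfloor}}(\mathbb{C})$ and then apply the preceding theorem in the Calkin algebra.

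There is no real obstacle here; the only point to be careful about is keeping track of the tensor factor and verifying that $U\otimes I$ is the correct intertwiner, which follows because the Clifford matrices $\Gamma_j$ act on the second tensor factor and commute with everything on the first.
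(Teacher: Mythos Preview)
Your proof is correct and matches the paper's intended approach: the paper states this result as an immediate corollary of the preceding theorem and gives no separate proof. Your observation that the first equality follows from applying the theorem to the $*$-automorphism $\mathrm{Ad}(U)$, and that the second requires either a direct Fredholmness argument or passing to the Calkin algebra and applying the theorem there, is exactly the reasoning the reader is expected to supply.
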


For many purposes, such as proving that a symmetry in $\mathbb{A}$ leads to a symmetry in $\Lambda(\mathbb{A})$, we need to know that we have complete flexibility in selecting the $\Gamma_j$.

\begin{lem}
Suppose $\Gamma_{1},\dots,\Gamma_{d}$ form a representation of the Clifford relations in $\boldsymbol{M}_{r}(\mathbb{C})$ and $\Gamma'_{1},\dots,\Gamma'_{d}$  form a representation of the Clifford relations in $\boldsymbol{M}_{s}(\mathbb{C})$.
If $A_{1},\dots,A_{d}$ are Hermitian elements of unital $C^{*}$-algebra $\mathcal{A}$ then
\begin{equation*}
\sum_{j=1}^{d}\left(A_{j}-\lambda_{j}\right)\otimes\Gamma_{j}
\end{equation*}
is invertible if, and only if,
\begin{equation*}
\sum_{j=1}^{d}\left(A_{j}-\lambda_{j}\right)\otimes \Gamma'_{j}
\end{equation*}
is invertible.
\end{lem}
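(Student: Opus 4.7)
My plan is to compare both representations to a fixed \emph{standard} irreducible representation $\Gamma^{(0)}_1,\dots,\Gamma^{(0)}_d$ of the Clifford relations in $M_{2^{\lfloor d/2\rfloor}}(\mathbb{C})$, and to exploit the fact that invertibility in $M_n(\mathcal{A})$ is preserved under unitary conjugation, under direct sums (where every summand is present), and under negation. The key input is the classification of finite-dimensional $*$-representations of the complex Clifford algebra.

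First I would invoke that classification. The $\Gamma_j$ and their products generate a copy of the complex Clifford algebra $Cl_d^{\mathbb{C}}$, which by Artin--Wedderburn is isomorphic to $M_{2^{d/2}}(\mathbb{C})$ when $d$ is even and to $M_{2^{(d-1)/2}}(\mathbb{C})\oplus M_{2^{(d-1)/2}}(\mathbb{C})$ when $d$ is odd. Consequently, any $*$-representation $\Gamma$ of the Clifford relations in $M_r(\mathbb{C})$ is unitarily equivalent to
\begin{equation*}
(\Gamma^{(0)})^{\oplus k}\oplus(-\Gamma^{(0)})^{\oplus m}
\end{equation*}
for some nonnegative integers $k,m$ with $(k+m)\,2^{\lfloor d/2\rfloor}=r$ and $k+m\geq 1$. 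For even $d$ one may take $m=0$, since there is a unique irreducible representation up to unitary equivalence; for odd $d$ the two inequivalent irreducibles are distinguished by the sign of the product $\Gamma_1\cdots\Gamma_d$, and negating every generator toggles that sign, so $-\Gamma^{(0)}$ furnishes a model for the other irreducible.

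Next I would track the spectral localizer through these operations. Writing $L(\Gamma)=\sum_j(A_j-\lambda_j)\otimes\Gamma_j\in M_r(\mathcal{A})$, a unitary conjugation $\Gamma'_j=V\Gamma_j V^*$ produces $L(\Gamma')=(I\otimes V)L(\Gamma)(I\otimes V^*)$, which preserves invertibility. A direct sum $\widetilde{\Gamma}_j=\Gamma^{(1)}_j\oplus\Gamma^{(2)}_j$ gives, after the canonical reshuffle of tensor factors, $L(\widetilde{\Gamma})\cong L(\Gamma^{(1)})\oplus L(\Gamma^{(2)})$, which is invertible in $M_{r_1+r_2}(\mathcal{A})$ if and only if both summands are invertible. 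Finally $L(-\Gamma)=-L(\Gamma)$ has exactly the same invertibility as $L(\Gamma)$. Combining these three facts with the normal form from the previous paragraph, $L(\Gamma)$ is invertible in $M_r(\mathcal{A})$ if and only if $L(\Gamma^{(0)})\in M_{2^{\lfloor d/2\rfloor}}(\mathcal{A})$ is invertible, and applying the same argument to $\Gamma'$ in $M_s(\mathbb{C})$ yields the claimed equivalence.

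The main obstacle is the classification step for odd $d$: one must verify that any second irreducible $*$-representation, inequivalent to $\Gamma^{(0)}$, is unitarily equivalent to $-\Gamma^{(0)}$, so that its contribution to the localizer differs from $L(\Gamma^{(0)})$ only by a sign. Once this is in hand, the remainder is routine tensor-product and Artin--Wedderburn bookkeeping.
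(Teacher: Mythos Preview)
Your argument is correct. The paper itself omits the proof, saying only that it is essentially the same as that of \cite[Lemma~1.2]{CerjanLoring2023EvenSpheres} (which treats the matrix case), so there is nothing to compare against directly; your classification-of-Clifford-representations approach is precisely the standard route and almost certainly what the cited proof does. The three stability facts you isolate --- invertibility of $L(\Gamma)$ is preserved under $\Gamma\mapsto V\Gamma V^{*}$, under direct sums with at least one summand, and under $\Gamma\mapsto -\Gamma$ --- together with the Artin--Wedderburn decomposition of $Cl_d^{\mathbb{C}}$ reduce everything to a single fixed irreducible, exactly as you outline. Your handling of the odd-$d$ case (the two inequivalent irreducibles are interchanged by negating all generators, since $d$ odd flips the sign of the central element $\Gamma_1\cdots\Gamma_d$) is the right observation and closes what you flag as the main obstacle.
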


The proof of this lemma is essentially the same as the proof of \cite[Lemma~1.2]{CerjanLoring2023EvenSpheres} and is omitted.  We need also the following lemmas from \cite{CerjanLoring2023EvenSpheres}, also generalized to the $C^*$-algebra setting.  Again the proofs are almost identical to the matrix case and are omitted.  We will denote by
$O(d)$ the real-valued orthogonal matrices of size $d.$

\begin{lem}
\label{lem:Unitary_action_on_Clifford} Suppose $\left(A_{1},\dots,A_{d}\right)$
is a $d$-tuple of Hermitian elements of unital $C^*$-algebra $\mathcal{A}$ 
and that  $U\in O(d)$. Suppose $\boldsymbol{\mathbb{\lambda}}\in\mathbb{R}^{d}$.
The $d$ elements
\begin{equation*}
\hat{A}_{j}=\sum_{s}u_{js}A_{s}
\end{equation*}
are also Hermitian and 
\begin{equation*}
\boldsymbol{\mathbb{\lambda}}\in\Lambda\left(A_{1},\dots,A_{d}\right)\iff U\boldsymbol{\mathbb{\lambda}}\in\Lambda(\hat{A}_{1},\dots,\hat{A}_{d}).
\end{equation*}
\end{lem}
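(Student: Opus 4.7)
The plan is to show that applying the orthogonal change of coordinates on the Hermitian tuple corresponds, inside the spectral localizer, to applying the same change of coordinates to the Clifford generators, and then to invoke the preceding flexibility lemma.

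First I would note the trivial part: each $\hat{A}_j$ is Hermitian because the coefficients $u_{js}$ are real and the $A_s$ are self-adjoint. Then, with $\boldsymbol{\mu}=U\boldsymbol{\lambda}$, I would compute
\begin{equation*}
L_{\boldsymbol{\mu}}(\hat{A}_{1},\dots,\hat{A}_{d})
=\sum_{j=1}^{d}\Bigl(\sum_{s}u_{js}(A_{s}-\lambda_{s}I)\Bigr)\otimes\Gamma_{j}
=\sum_{s=1}^{d}(A_{s}-\lambda_{s}I)\otimes\tilde{\Gamma}_{s},
\end{equation*}
where $\tilde{\Gamma}_{s}=\sum_{j}u_{js}\Gamma_{j}$. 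The core of the argument is then to verify that $\tilde{\Gamma}_{1},\dots,\tilde{\Gamma}_{d}$ is again a representation of the Clifford relations on $\mathbb{C}^{r}$.

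The step I expect to require care is checking the anticommutation relations. Self-adjointness of $\tilde{\Gamma}_{s}$ is immediate from the reality of $U$. For the rest, I would compute
\begin{equation*}
\tilde{\Gamma}_{s}\tilde{\Gamma}_{t}+\tilde{\Gamma}_{t}\tilde{\Gamma}_{s}
=\sum_{j,k}u_{js}u_{kt}(\Gamma_{j}\Gamma_{k}+\Gamma_{k}\Gamma_{j})
=2\sum_{j}u_{js}u_{jt}\,I
=2(U^{\mathsf{T}}U)_{st}I=2\delta_{st}I,
\end{equation*}
using $\Gamma_{j}^{2}=I$ and $\Gamma_{j}\Gamma_{k}=-\Gamma_{k}\Gamma_{j}$ for $j\neq k$, together with orthogonality of $U$. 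This simultaneously yields $\tilde{\Gamma}_{s}^{2}=I$ and $\tilde{\Gamma}_{s}\tilde{\Gamma}_{t}=-\tilde{\Gamma}_{t}\tilde{\Gamma}_{s}$ for $s\neq t$, so $\tilde{\Gamma}_{1},\dots,\tilde{\Gamma}_{d}$ is indeed a representation of the Clifford relations of the same dimension $r$.

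Finally, by the previous (flexibility) lemma, the invertibility of $\sum_{s}(A_{s}-\lambda_{s}I)\otimes\tilde{\Gamma}_{s}$ is equivalent to the invertibility of $\sum_{s}(A_{s}-\lambda_{s}I)\otimes\Gamma_{s}=L_{\boldsymbol{\lambda}}(A_{1},\dots,A_{d})$. Combined with the identity above, this gives $L_{U\boldsymbol{\lambda}}(\hat{A}_{1},\dots,\hat{A}_{d})$ invertible if and only if $L_{\boldsymbol{\lambda}}(A_{1},\dots,A_{d})$ is invertible, which is exactly the claimed equivalence $\boldsymbol{\lambda}\in\Lambda(A_{1},\dots,A_{d})\iff U\boldsymbol{\lambda}\in\Lambda(\hat{A}_{1},\dots,\hat{A}_{d})$.
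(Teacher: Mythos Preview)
Your argument is correct and is exactly the approach the paper has in mind: the paper omits the proof, pointing to the matrix case in \cite{CerjanLoring2023EvenSpheres}, and that proof proceeds just as you do---rewrite $L_{U\boldsymbol{\lambda}}(\hat{A}_1,\dots,\hat{A}_d)$ as $\sum_s (A_s-\lambda_s I)\otimes\tilde{\Gamma}_s$ with $\tilde{\Gamma}_s=\sum_j u_{js}\Gamma_j$, verify via $U^{\mathsf{T}}U=I$ that the $\tilde{\Gamma}_s$ again satisfy the Clifford relations, and invoke the preceding flexibility lemma.
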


\begin{thm}
\label{thm:symmetry_in_spectrum}
Suppose $\left(A_{1},\dots,A_{d}\right)$ are  Hermitian elements in the unital $C^*$-algebra $\mathcal{A}$ and  that $U\in O(d)$.
Let 
\begin{equation*}
\hat{A}_{j}=\sum_{s}u_{js}A_{s}.
\end{equation*}
If there exists a unitary $Q$ in $\mathcal{A}$ such that 
$Q\hat{A_{j}}Q^{*}=A_{j}$ for all $j$ then 
\begin{equation*}
\boldsymbol{\mathbb{\lambda}}\in\Lambda\left(A_{1},\dots,A_{d}\right)\iff U\boldsymbol{\mathbb{\lambda}}\in\Lambda\left(A_{1},\dots,A_{d}\right).
\end{equation*}
\end{thm}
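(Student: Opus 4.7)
The plan is to combine Lemma~\ref{lem:Unitary_action_on_Clifford} with a $C^{*}$-algebraic version of Corollary~\ref{cor:unitary_invariance}. First I would apply Lemma~\ref{lem:Unitary_action_on_Clifford} with the given $U\in O(d)$ to rewrite the condition $\boldsymbol{\lambda}\in\Lambda(A_{1},\dots,A_{d})$ as the equivalent condition $U\boldsymbol{\lambda}\in\Lambda(\hat{A}_{1},\dots,\hat{A}_{d})$. This reduces the theorem to establishing the equality of the Clifford spectra of the two tuples $(A_{1},\dots,A_{d})$ and $(\hat{A}_{1},\dots,\hat{A}_{d})$.

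The second step is to promote Corollary~\ref{cor:unitary_invariance} to the $C^{*}$-algebra setting. Although it was stated for operators on Hilbert space, the same argument goes through verbatim inside $M_{2^{\lfloor d/2\rfloor}}(\mathcal{A})$: since $Q\otimes I$ is a unitary element of this algebra and $Q\hat{A}_{j}Q^{*}=A_{j}$ for every $j$, one has
\begin{equation*}
(Q\otimes I)\,L_{\boldsymbol{\mu}}(\hat{A}_{1},\dots,\hat{A}_{d})\,(Q^{*}\otimes I)=L_{\boldsymbol{\mu}}(A_{1},\dots,A_{d})
\end{equation*}
for every $\boldsymbol{\mu}\in\mathbb{R}^{d}$. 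Conjugation by a unitary preserves invertibility, so the two localizers are simultaneously invertible and therefore $\Lambda(\hat{A}_{1},\dots,\hat{A}_{d})=\Lambda(A_{1},\dots,A_{d})$.

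Chaining the two equivalences at $\boldsymbol{\mu}=U\boldsymbol{\lambda}$ then yields
\begin{equation*}
\boldsymbol{\lambda}\in\Lambda(A_{1},\dots,A_{d})\iff U\boldsymbol{\lambda}\in\Lambda(\hat{A}_{1},\dots,\hat{A}_{d})\iff U\boldsymbol{\lambda}\in\Lambda(A_{1},\dots,A_{d}),
\end{equation*}
which is precisely the claimed symmetry.

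There is essentially no obstacle here; the proof is simply the composition of two facts already in hand. The only point requiring a moment of thought is the extension of Corollary~\ref{cor:unitary_invariance} from $\mathcal{B}(\mathcal{H})$ to a general unital $C^{*}$-algebra, and the displayed identity above makes that transparent.
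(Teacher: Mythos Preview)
Your proposal is correct and matches the paper's intended approach: the paper omits the proof here but indicates (in the discussion preceding Theorem~\ref{thm:symmetry_in_PS}) that the argument is precisely the combination of Lemma~\ref{lem:Unitary_action_on_Clifford} with the observation that conjugating each $A_j$ by a unitary $Q$ makes $L_{\boldsymbol{\lambda}}$ unitarily equivalent via $Q\otimes I$. Your explicit verification of the $C^*$-algebraic version of Corollary~\ref{cor:unitary_invariance} is exactly the missing step the paper alludes to.
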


\section{The commuting and essentially commuting cases}

We know that a single operator's spectrum can look very different in the infinite-dimensional case when compared with the spectrum of a finite-dimensional counterpart.  In the finite-dimensional case, any operator (or matrix) $T$ will have finite spectrum.  In contrast, in infinite dimensions we can get any nonempty closed and bounded subset of the complex plane.  This phenomenon gives us our first examples where the Clifford spectrum looks different from how things looked in finite dimensions \cite{DeBonisLorSver_joint_spectrum,sykora2016fuzzy_space_kit}.

In the case $d=2$ the Clifford spectrum is a minor variation on the ordinary spectrum.  With the standard $\Gamma$ matrices, as in Eq.~\ref{eq:two_gamma_matrices}, the spectral localizer is
\begin{equation*}
L_{(x,y)}(A_{1},A_{2})=\left[\begin{array}{cc}
0 & A_{1}-iA_{2}-(x-iy)I\\
A_{1}+iA_{2}-(x+iy)I & 0
\end{array}\right].
\end{equation*}
This tells us immediately that $(x,y)\in \Lambda(A_1,A_2)$ exactly when $x+iy$ is in the ordinary spectrum of $A_1 +iA_2$.  For example, we can have $A_1 +iA_2$ be the bilateral shift and so have an example where
\begin{equation}
\Lambda(A_1,A_2) = \mathbb{T}^1.
\end{equation}

In finite dimensions, we have conjectured \cite[\S 8]{CerjanLoring2023EvenSpheres} that one-dimensional manifolds cannot arise as the Clifford spectrum of three matrices.  In infinite dimensions, we can get the circle as the Clifford spectrum of three operators with commutative and noncommutative examples. 

\begin{lem}
If $A_{1},\dots,A_{d}$ are pair-wise commuting Hermitian elements
of a unital $C^{*}$-algebra $\mathcal{A}$ then the Clifford spectrum
of $\mathbb{A}=(A_{1},\dots,A_{d})$ equals the standard joint spectrum.
\end{lem}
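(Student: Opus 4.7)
The plan is to exploit the Clifford relations to square the spectral localizer and collapse it to a scalar expression involving $\sum_j (A_j - \lambda_j)^2$, which is straightforward to analyze via Gelfand theory on the commutative $C^*$-subalgebra generated by the $A_j$.

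First I would compute $L_{\boldsymbol{\lambda}}(\mathbb{A})^2$ directly. Expanding the product gives
\begin{equation*}
L_{\boldsymbol{\lambda}}(\mathbb{A})^2 = \sum_{j=1}^d (A_j-\lambda_j)^2 \otimes \Gamma_j^2 + \sum_{j<k} (A_j-\lambda_j)(A_k-\lambda_k) \otimes (\Gamma_j \Gamma_k + \Gamma_k \Gamma_j),
\end{equation*}
where I have already used that the $A_j$ pairwise commute to symmetrize the cross terms. The Clifford relations collapse this to
\begin{equation*}
L_{\boldsymbol{\lambda}}(\mathbb{A})^2 = \left(\sum_{j=1}^d (A_j-\lambda_j)^2\right) \otimes I.
\end{equation*}
Since $L_{\boldsymbol{\lambda}}(\mathbb{A})$ is self-adjoint, it is invertible if and only if $L_{\boldsymbol{\lambda}}(\mathbb{A})^2$ is invertible, which happens if and only if the positive element $H_{\boldsymbol{\lambda}} := \sum_j (A_j-\lambda_j)^2$ is invertible in $\mathcal{A}$.

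Next I would pass to the commutative $C^*$-subalgebra $\mathcal{C} \subseteq \mathcal{A}$ generated by $A_1, \dots, A_d$ together with the unit. By Gelfand, $\mathcal{C} \cong C(X)$ for a compact Hausdorff space $X$, and the standard joint spectrum of $\mathbb{A}$ is precisely the image of the joint Gelfand transform,
\begin{equation*}
\operatorname{sp}(\mathbb{A}) = \{(\chi(A_1), \dots, \chi(A_d)) : \chi \in X\} \subseteq \mathbb{R}^d.
\end{equation*}
Invertibility of $H_{\boldsymbol{\lambda}}$ in $\mathcal{C}$ is equivalent, by Gelfand, to $\sum_j (\chi(A_j) - \lambda_j)^2 > 0$ for every $\chi \in X$, i.e. to $\boldsymbol{\lambda} \notin \operatorname{sp}(\mathbb{A})$. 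Finally, invertibility in $\mathcal{C}$ agrees with invertibility in the ambient algebra $\mathcal{A}$ by spectral permanence for commutative $C^*$-subalgebras of $C^*$-algebras (this is the standard fact that the spectrum of a normal element does not depend on which unital $C^*$-superalgebra one computes it in).

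The only step that requires any care is the squaring identity, and there the point to watch is making sure the pairwise commutativity of the $A_j$ is used correctly when symmetrizing the off-diagonal terms so that the anticommutation of the $\Gamma_j$ annihilates them; the rest is a routine appeal to Gelfand theory and spectral permanence.
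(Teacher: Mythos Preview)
Your proof is correct and is essentially the same approach as the paper's: both reduce to the commutative $C^{*}$-algebra generated by the $A_j$ via Gelfand theory and spectral permanence, and both ultimately rely on the Clifford squaring identity. The only cosmetic difference is the order of operations---the paper first passes to $C(X)$ and then computes, pointwise, that $\sigma\bigl(\sum\alpha_j\Gamma_j\bigr)=\{\pm(\sum\alpha_j^2)^{1/2}\}$ (which is itself proved by squaring), whereas you square $L_{\boldsymbol{\lambda}}(\mathbb{A})$ at the outset and then invoke Gelfand on the scalar element $\sum_j(A_j-\lambda_j)^2$.
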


\begin{proof}
Because of spectral permanence and Corollary~\ref{cor:unitary_invariance},  we can assume that $\mathcal{A}=C(X)$ for some compact
Hausdorff space and that $A_{j}=f_{j}$ for some continuous $f_{j}:X\rightarrow\mathbb{R}$.
In this case, we can work pointwise and we find 
\begin{equation*}
L_{\boldsymbol{\lambda}}(\mathbb{A})=g:X\rightarrow M_{2^{\left\lfloor d/2\right\rfloor }}(\mathbb{C})
\end{equation*}
where 
\begin{equation*}
g(x)=\sum_{j=1}^{d}(f_{j}(x)-\lambda_{j})\Gamma_{j}.
\end{equation*}
An easy calculation shows that, for scalars $\alpha_{j}$,
\begin{equation*}
\sigma \left( \sum\alpha_{j}\Gamma_{j} \right)
= \left\{ 
\pm\sqrt{\sum\alpha_{j}^{2}}
\right\}.
\end{equation*}
Thus $g$ is singular only when there is a point $x_{0}$ in $X$
such that $f_{j}(x_0)=\lambda_{j}$ for all $j$. Thus the Clifford
spectrum is just the joint spectrum.
\end{proof}

\begin{example}
\label{ex:any_space_occurs}
If $X$ is a compact nonempty subset of $\mathbb{R}^{d}$ then there
is a commutative example of $d$ Hermitian operators on separable
Hilbert space whose Clifford spectrum equals $X$.
Notice that $X$ is metrizable so $C(X)$ is separable and so can be represented on a separable
Hilbert space.
\end{example}

\begin{thm} \label{thm:partion_commuting_inclusion}
Suppose  $A_{1},\dots,A_d$ are Hermitian elements of a unital $C^{*}$-algebra $\mathcal{A}$ and that $1\leq r<d$.  If $A_{j}$ commutes with $A_{k}$ whenever $j\leq r$ and $k>r$ then
\begin{equation*}
\Lambda(A_{1},\cdots,A_{d})\subseteq\Lambda(A_{1},\cdots,A_{r})\times\Lambda(A_{r+1},\cdots,A_{d}).
\end{equation*}
\end{thm}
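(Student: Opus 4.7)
The plan is to prove the contrapositive: assuming without loss of generality that $(\lambda_{1},\ldots,\lambda_{r})\notin\Lambda(A_{1},\ldots,A_{r})$, I would show that $L_{\boldsymbol{\lambda}}(A_{1},\ldots,A_{d})$ is invertible. First I would split the spectral localizer as $L_{\boldsymbol{\lambda}}(A_{1},\ldots,A_{d})=P+Q$, where
\[
P=\sum_{j=1}^{r}(A_{j}-\lambda_{j}I)\otimes\Gamma_{j}\qquad\text{and}\qquad Q=\sum_{k=r+1}^{d}(A_{k}-\lambda_{k}I)\otimes\Gamma_{k}.
\]
Both $P$ and $Q$ are self-adjoint elements of $\mathcal{A}\otimes M_{2^{\left\lfloor d/2\right\rfloor }}(\mathbb{C})$.

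The key observation is that $P$ and $Q$ anticommute. Indeed, for $j\leq r<k$ the hypothesis gives $A_{j}A_{k}=A_{k}A_{j}$, while the Clifford relations give $\Gamma_{j}\Gamma_{k}=-\Gamma_{k}\Gamma_{j}$, so every cross term in $PQ$ is the negative of the corresponding cross term in $QP$. Consequently $L_{\boldsymbol{\lambda}}^{2}=P^{2}+Q^{2}$, a sum of two positive elements.

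Next I would appeal to the flexibility lemma preceding Lemma~\ref{lem:Unitary_action_on_Clifford} to evaluate $\Lambda(A_{1},\ldots,A_{r})$ using the representation of the Clifford relations on $r$ generators supplied by the first $r$ matrices $\Gamma_{1},\ldots,\Gamma_{r}$ of the ambient $d$-generator representation; these still satisfy all of the required relations. With this choice, $P$ is literally the spectral localizer of $(A_{1},\ldots,A_{r})$ at $(\lambda_{1},\ldots,\lambda_{r})$, so the contrapositive hypothesis is precisely that $P$ is invertible. Hence $P^{2}\geq\varepsilon I$ for some $\varepsilon>0$, and since $Q^{2}\geq 0$, we get $L_{\boldsymbol{\lambda}}^{2}=P^{2}+Q^{2}\geq\varepsilon I$, forcing $L_{\boldsymbol{\lambda}}$ to be invertible. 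I do not foresee a serious obstacle; the only mildly delicate step is recognizing that the first $r$ of the $\Gamma_{j}$ form a (perhaps non-irreducible) Clifford representation that can legitimately be used to compute the smaller Clifford spectrum, which is exactly what the flexibility lemma is designed to allow.
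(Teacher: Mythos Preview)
Your proof is correct and follows essentially the same route as the paper: both arguments reduce to the identity $L_{\boldsymbol{\lambda}}^{2}=P^{2}+Q^{2}$ (the paper obtains it by expanding the square and dropping the cross commutators, you by noting $PQ=-QP$, which is the same computation), and then conclude invertibility from $P^{2}\geq\varepsilon I$ and $Q^{2}\geq 0$. If anything, you are slightly more explicit than the paper in invoking the flexibility lemma to justify treating $\Gamma_{1},\dots,\Gamma_{r}$ as a legitimate (reducible) Clifford representation for computing $\Lambda(A_{1},\dots,A_{r})$, a point the paper leaves implicit in Equation~\ref{eqn:localizer_square_sum}.
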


\begin{proof}
We always have
\begin{equation*}
\left(L_{\boldsymbol{\lambda}}(A_{1},\cdots,A_{d})\right)^{2}=\sum(A_{j}-\lambda_{j})^{2}\otimes I+\sum_{j<k}\left[A_{j},A_{k}\right]\otimes\Gamma_{j}\Gamma_{k}
\end{equation*}
but with the given assumptions many commutators vanish.  Here we obtain 
\begin{equation*}
\left(L_{\boldsymbol{\lambda}}(A_{1},\cdots,A_{d})\right)^{2}  =\sum_{j=1}^{d}(A_{j}-\lambda_{j})^{2}\otimes I+\sum_{j<k\leq r}\left[A_{j},A_{k}\right]\otimes\Gamma_{j}\Gamma_{k}+\sum_{r\leq j<k}\left[A_{j},A_{k}\right]\otimes\Gamma_{j}\Gamma_{k}
\end{equation*}
which implies
\begin{equation}
\label{eqn:localizer_square_sum}
\left(L_{\boldsymbol{\lambda}}(A_{1},\cdots,A_{d})\right)^{2}
=
\left(L_{(\lambda_{1},\dots,\lambda_{r})}(A_{1},\cdots,A_{r})\right)^{2}+\left(L_{(\lambda_{r+1},\dots,\lambda_{d})}(A_{r+1},\cdots,A_{d})\right)^{2}.
\end{equation}
If $(\lambda_{1},\dots,\lambda_{r})\notin\Lambda(A_{1},\cdots,A_{r})$
then there is a positive $a$ such that
\begin{equation*}
a\leq\left(L_{(\lambda_{1},\dots,\lambda_{r})}(A_{1},\cdots,A_{r})\right)^{2} .
\end{equation*}
 We always have
\begin{equation*}
0\leq\left(L_{(\lambda_{r+1},\dots,\lambda_{d})}(A_{r+1},\cdots,A_{d})\right)^{2}
\end{equation*}
and so 
\begin{equation*}
a\leq\left(L_{\boldsymbol{\lambda}}(A_{1},\cdots,A_{d})\right)^{2}.
\end{equation*}
Thus 
\begin{equation*}
(\lambda_{1},\dots,\lambda_{r})\notin\Lambda(A_{1},\cdots,A_{r})\implies(\lambda_{1},\dots,\lambda_{d})\notin\Lambda(A_{1},\cdots,A_{d}).
\end{equation*}
By symmetry, 
\begin{equation*}
(\lambda_{r+1},\dots,\lambda_{d})\notin\Lambda(A_{r+1},\cdots,A_{d})\implies(\lambda_{1},\dots,\lambda_{d})\notin\Lambda(A_{1},\cdots,A_{d}).
\end{equation*}
\end{proof}

Notice that the reverse inclusion in Thereom~\ref{thm:partion_commuting_inclusion} is already false in the case where
all the $A_{j}$ commute.  We do get
equality in a simple special case.

\begin{thm} 
\label{thm:last_is_scalar}
Suppose  $A_{1},\dots,A_{d-1}$ are Hermitian elements of a unital $C^{*}$-algebra $\mathcal{A}$. For any real scalar $\alpha$ we have
\begin{equation*}
\Lambda(A_{1},\cdots,A_{d-1},\alpha I)= \Lambda(A_{1},\cdots,A_{d-1})\times \{\alpha\}.
\end{equation*}
\end{thm}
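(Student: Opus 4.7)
The plan is to prove the two inclusions separately. The inclusion $\subseteq$ is an immediate consequence of Theorem~\ref{thm:partion_commuting_inclusion}, since $\alpha I$ commutes with every $A_{j}$. Applying that theorem with $r=d-1$ yields
\begin{equation*}
\Lambda(A_{1},\ldots,A_{d-1},\alpha I)\subseteq\Lambda(A_{1},\ldots,A_{d-1})\times\Lambda(\alpha I),
\end{equation*}
and one checks directly that $\Lambda(\alpha I)=\{\alpha\}$: the single-operator spectral localizer $L_{\lambda}(\alpha I)=(\alpha-\lambda)I\otimes\Gamma_{1}$ is invertible whenever $\lambda\neq\alpha$ because $\Gamma_{1}$ is unitary and $(\alpha-\lambda)I$ is a nonzero scalar.

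For the reverse inclusion $\supseteq$, suppose $(\lambda_{1},\ldots,\lambda_{d-1})\in\Lambda(A_{1},\ldots,A_{d-1})$; I want to show $(\lambda_{1},\ldots,\lambda_{d-1},\alpha)\in\Lambda(A_{1},\ldots,A_{d-1},\alpha I)$. Choose any representation $\Gamma_{1},\ldots,\Gamma_{d}$ of the Clifford relations for the $d$-tuple. Evaluating the spectral localizer at $\lambda_{d}=\alpha$ gives
\begin{equation*}
L_{(\lambda_{1},\ldots,\lambda_{d-1},\alpha)}(A_{1},\ldots,A_{d-1},\alpha I)=\sum_{j=1}^{d-1}(A_{j}-\lambda_{j})\otimes\Gamma_{j},
\end{equation*}
since the final term vanishes. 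The truncated tuple $\Gamma_{1},\ldots,\Gamma_{d-1}$ still satisfies the Clifford relations on $d-1$ generators, so the lemma on flexibility in the choice of Clifford representation tells us that this element is invertible if and only if the standard $(d-1)$-spectral localizer $L_{(\lambda_{1},\ldots,\lambda_{d-1})}(A_{1},\ldots,A_{d-1})$ is invertible. By assumption the latter is singular, so the former is singular, proving the desired membership.

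There is no real obstacle here—the argument is a two-line bookkeeping exercise, and the only thing one has to be mindful of is the mismatch in the matrix sizes $2^{\lfloor d/2\rfloor}$ versus $2^{\lfloor(d-1)/2\rfloor}$ used in the two spectral localizers, which is precisely what the flexibility lemma is designed to handle.
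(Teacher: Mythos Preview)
Your proof is correct and follows essentially the same route as the paper's. The paper condenses both inclusions into the single identity
\[
\left(L_{\boldsymbol{\lambda}}(A_{1},\dots,A_{d-1},\alpha I)\right)^{2}
=\left(L_{(\lambda_{1},\dots,\lambda_{d-1})}(A_{1},\dots,A_{d-1})\right)^{2}+(\alpha-\lambda_{d})^{2}I
\]
taken from the proof of Theorem~\ref{thm:partion_commuting_inclusion}, but the underlying content---vanishing of the last summand at $\lambda_{d}=\alpha$ together with the flexibility lemma to reconcile the two Clifford representations---is exactly what you invoke.
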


\begin{proof}
Equation~\ref{eqn:localizer_square_sum} here becomes
\begin{equation*}
\left(L_{\boldsymbol{\lambda}}(A_{1},\cdots,A_{d-1},\alpha I)\right)^{2}
=
\left(L_{(\lambda_{1},\dots,\lambda_{d-1})}(A_{1},\cdots,A_{d-1})\right)^{2}+\left( \alpha - \lambda_d \right)^{2} I
\end{equation*}
and the result follows.
\end{proof}

The following provides more evidence to support two conjectures from \cite{CerjanLoring2023EvenSpheres},
 that  when the Clifford spectrum of a $d$-tuple of $n$-by-$n$ matrices is nonempty, and that when it is finite, it must have cardinality at most $n$.
 
\begin{thm}
If $A_{1}$, $A_{2}$ and $A_{3}$ are $n$-by-$n$  Hermitian matrices
and $A_{1}$ commutes with both $A_{2}$ and  $A_{3}$ then the Clifford
spectrum of $(A_{1},A_{2},A_{3})$ is a set containing at least one point and at most $n$ points.
\end{thm}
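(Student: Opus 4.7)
The plan is to diagonalize the commuting operator $A_1$ and reduce the three-tuple problem to finitely many two-tuple subproblems, each of which is a usual single-operator spectral question from the $d=2$ discussion.

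I would first decompose $\mathcal{H}$ into the eigenspaces $V_1,\dots,V_k$ of $A_1$, with distinct eigenvalues $\alpha_1,\dots,\alpha_k$. Because $A_2$ and $A_3$ both commute with $A_1$, each $V_i$ is invariant under them, and by conjugating by a unitary that simultaneously block-diagonalizes the three matrices (legal by Corollary~\ref{cor:unitary_invariance}) I may assume the $A_j$ are block diagonal and that $A_1|_{V_i}=\alpha_i I_{V_i}$. The spectral localizer $L_{\boldsymbol{\lambda}}(A_1,A_2,A_3)$ inherits this block structure on $\mathcal{H}\otimes\mathbb{C}^{2}$, so invertibility can be checked block by block, yielding
\begin{equation*}
\Lambda(A_1,A_2,A_3)=\bigcup_{i=1}^{k}\Lambda\bigl(\alpha_i I_{V_i},\,A_2|_{V_i},\,A_3|_{V_i}\bigr).
\end{equation*}

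Next, for each $i$ I would apply Theorem~\ref{thm:last_is_scalar} to identify the $i$-th piece. Since that theorem has the scalar in the last coordinate, I would first invoke Lemma~\ref{lem:Unitary_action_on_Clifford} with a coordinate-permutation matrix $U\in O(3)$ to move the scalar to position three, obtaining
\begin{equation*}
\Lambda\bigl(\alpha_i I_{V_i},\,A_2|_{V_i},\,A_3|_{V_i}\bigr)=\{\alpha_i\}\times\Lambda\bigl(A_2|_{V_i},A_3|_{V_i}\bigr).
\end{equation*}
By the $d=2$ identification derived in the paragraph before the first lemma of Section~3, the right-hand factor equals $\{(x,y)\in\mathbb{R}^{2}:x+iy\in\sigma(A_2|_{V_i}+iA_3|_{V_i})\}$. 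Since $A_2|_{V_i}+iA_3|_{V_i}$ is a matrix on the finite-dimensional space $V_i$, its spectrum is nonempty with between $1$ and $\dim(V_i)$ points.

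Finally, the sets $\{\alpha_i\}\times\Lambda(A_2|_{V_i},A_3|_{V_i})$ are pairwise disjoint in their first coordinate, so
\begin{equation*}
\bigl|\Lambda(A_1,A_2,A_3)\bigr|=\sum_{i=1}^{k}\bigl|\Lambda(A_2|_{V_i},A_3|_{V_i})\bigr|,
\end{equation*}
which lies between $k\geq 1$ and $\sum_i\dim(V_i)=n$. The main obstacle — really the only item requiring care — is the bookkeeping needed to move the scalar coordinate from position one to position $d$ before invoking Theorem~\ref{thm:last_is_scalar}; everything else is the direct-sum decomposition of the localizer together with the elementary fact that a complex $m\times m$ matrix has between $1$ and $m$ eigenvalues.
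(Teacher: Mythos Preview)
Your proposal is correct and follows essentially the same approach as the paper: block-diagonalize via the eigenspaces of $A_1$, reduce to the scalar-first-coordinate case, and invoke Theorem~\ref{thm:last_is_scalar} together with the $d=2$ identification of the Clifford spectrum with $\sigma(A_2|_{V_i}+iA_3|_{V_i})$. You are in fact slightly more careful than the paper, explicitly handling the coordinate permutation via Lemma~\ref{lem:Unitary_action_on_Clifford} and the disjointness-in-first-coordinate argument that makes the cardinalities add; the paper glosses over both of these with ``the result follows.''
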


\begin{proof}
First we look at the special case where the first matrix is scalar.  Here Theorem~\ref{thm:last_is_scalar}
tells us that $\Lambda(\alpha I,A_2,A_3)$ equals $\{\alpha\}\times \sigma(A_2 +iA_3)$ which is nonempty and can  have no more than $n$ points.

In the general case, let $\alpha_1,\dots,\alpha_r$ denote the distinct eigenvalues of $A_1$.
We can conjugate all the matrices by a unitary to ensure that 
\begin{equation*}
A_{1}=\left[\begin{array}{cccc}
\alpha_{1}I\\
 & \alpha_{2}I\\
 &  & \ddots\\
 &  &  & \alpha_{r}I
\end{array}\right].
\end{equation*}
The fact that the other matrices commute with $A_{1}$ forces them
to be block diagonal,
\begin{equation*}
A_{2}=\left[\begin{array}{cccc}
B_{1}\\
 & B_{2}\\
 &  & \ddots\\
 &  &  & B_{r}
\end{array}\right],\quad A_{3}=\left[\begin{array}{cccc}
C_{1}\\
 & C_{2}\\
 &  & \ddots\\
 &  &  & C_{r}
\end{array}\right].
\end{equation*}
Therefore
\begin{equation*}
\Lambda(A_{1},A_{2},A_{3})=\bigcup_{j=1}^{r}\Lambda(\alpha_{j}I,B_{j},C_{j})
\end{equation*}
and the result follows.
\end{proof}

\section{Singular values in infinite dimensions}

The spectrum of a matrix is generally less informative when that matrix
is not normal. In essence, this is why we cannot build as wide-ranging
functional calculus in the nonnormal case as in the normal case. Some
additional information can be found in the pseudospectum of a nonnormal
matrix. The pseudospectrum \cite{Trefethen1997PS_of_operators} of a square matrix $X$ is based on look
formally at the function 
\begin{equation}
\label{eqn:traditional_pseudospectrum}
\alpha\mapsto\left\Vert \left(X-\alpha I\right)^{-1}\right\Vert ^{-1}
\end{equation}
which takes on a gradation of values for $\alpha\in\mathbb{C}$, including
$0$ by default when $X-\lambda$ is not invertible. This function
can be seen as the pseudospectrum, but traditionally one looked at
the inverse image of $[0,\epsilon)$ and called that the $\epsilon$-pseudospectrum.

In applied math, the pseudospectrum is only defined for a single, typically non-normal, square matrix. One can easily translate this into a theory applying to two Hermitian matrices by considering $X=A_1 + iA_2$.  The norm in Eq.~\ref{eqn:traditional_pseudospectrum} can be interpreted in many ways.  
Here we are only interested in the operator norm.  We will find in more convenient to compute  $\| X^{-1} \|^{-1}$  as the smallest singular value of a matrix.
Various papers have looked at the pseudospectrum of a single nonnormal operator on separable Hilbert space, including
\cite{bottcher2003toeplitz_PS,henry2017pseudospectra,JiaFeng2021Normal_op_PS} in operator theory and \cite{komis2022robustness} in physics.

We need a replacement for the smallest singular value of a non-square matrix that works for a bounded linear operator 
$T:\mathcal{H}_{1}\rightarrow\mathcal{H}_{2}. $
For now, we are content to deal with the bounded linear operators on separable Hilbert space.  In the finite-dimensional case, one characterization of the smallest singular value is the minimum value of $\|T\boldsymbol{v}\|$ as $\boldsymbol{v}$ ranges over all unit vectors.  
We take this as a definition in the infinite-dimensional case, except we use infimum,
\begin{equation*}
s_{\min}(T)=\inf_{\|\boldsymbol{v}\|=1}\|T\boldsymbol{v}\|.
\end{equation*}

In the special case of a compact operator, there is a spectral decomposition \cite{Gohberg1990ClassesOfLinea0perators} and so all singular
values are defined.  We need only the smallest, and of course the largest since we are looking at the spectral norm.

\begin{lem}
\label{lem:s_min_for_invertible}
If $T:\mathcal{H}_{1}\rightarrow\mathcal{H}_{2} $ is invertible, then
\begin{equation*}s_{\min}(T)=\|T^{-1}\|^{-1}.
\end{equation*}
\end{lem}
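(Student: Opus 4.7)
The plan is to establish the two inequalities $s_{\min}(T)\ge \|T^{-1}\|^{-1}$ and $s_{\min}(T)\le \|T^{-1}\|^{-1}$ separately, using only the invertibility of $T$ and the definitions of operator norm and of $s_{\min}$.

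For the lower bound, I would take an arbitrary unit vector $\boldsymbol{v}\in\mathcal{H}_1$ and write $\boldsymbol{v}=T^{-1}T\boldsymbol{v}$. Then
\begin{equation*}
1=\|\boldsymbol{v}\|=\|T^{-1}T\boldsymbol{v}\|\le \|T^{-1}\|\,\|T\boldsymbol{v}\|,
\end{equation*}
so $\|T\boldsymbol{v}\|\ge \|T^{-1}\|^{-1}$. Taking the infimum over unit $\boldsymbol{v}$ yields $s_{\min}(T)\ge \|T^{-1}\|^{-1}$.

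For the reverse inequality, I would use the definition of the operator norm as a supremum. Pick a sequence of unit vectors $\boldsymbol{w}_n\in\mathcal{H}_2$ with $\|T^{-1}\boldsymbol{w}_n\|\to \|T^{-1}\|$, which is possible because $T^{-1}$ is a nonzero bounded operator (here I am using that $T$ is invertible, so $T^{-1}$ exists and is bounded by the inverse mapping theorem, hence $\|T^{-1}\|<\infty$). Set $\boldsymbol{v}_n = T^{-1}\boldsymbol{w}_n/\|T^{-1}\boldsymbol{w}_n\|$, which is a unit vector in $\mathcal{H}_1$. Then
\begin{equation*}
\|T\boldsymbol{v}_n\| = \frac{\|\boldsymbol{w}_n\|}{\|T^{-1}\boldsymbol{w}_n\|}=\frac{1}{\|T^{-1}\boldsymbol{w}_n\|}\longrightarrow \|T^{-1}\|^{-1}.
\end{equation*}
Hence $s_{\min}(T)\le \|T^{-1}\|^{-1}$, completing the proof.

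There is no real obstacle here; the only mild subtlety is that the infimum defining $s_{\min}(T)$ need not be attained in infinite dimensions, but the argument via a supremizing sequence for $\|T^{-1}\|$ handles this cleanly. I would not need to invoke any functional calculus or compactness, only the bound $\|T^{-1}\boldsymbol{w}\|\le \|T^{-1}\|\,\|\boldsymbol{w}\|$ and its tightness in the form of an approximating sequence.
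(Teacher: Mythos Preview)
Your proof is correct and essentially the same as the paper's. The paper compresses the two inequalities into a single observation: writing $s_{\min}(T)=\inf_{\boldsymbol{v}\neq 0}\|T\boldsymbol{v}\|/\|\boldsymbol{v}\|$ and substituting $\boldsymbol{w}=T\boldsymbol{v}$ gives $\bigl(\|T\boldsymbol{v}\|/\|\boldsymbol{v}\|\bigr)^{-1}=\|T^{-1}\boldsymbol{w}\|/\|\boldsymbol{w}\|$, so the infimum and the supremum defining $\|T^{-1}\|$ are reciprocals via this bijection. Your separate lower-bound inequality and approximating-sequence upper bound unpack exactly this correspondence, just more explicitly.
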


\begin{proof}
By homogeneity we can compute $s_{\min}(T)$ as
\begin{align*}
s_{\min}(T) & =\inf_{\boldsymbol{v}\neq0}\frac{\|T\boldsymbol{v}\|}{\|\boldsymbol{v}\|}.
\end{align*}
If $T\boldsymbol{v}=\boldsymbol{w}$ then 
\begin{equation*}
\left(\frac{\left\Vert T\boldsymbol{v}\right\Vert }{\left\Vert \boldsymbol{v}\right\Vert }\right)^{-1}=\frac{\left\Vert T^{-1}\boldsymbol{w}\right\Vert }{\left\Vert \boldsymbol{w}\right\Vert }
\end{equation*}
and the result now follows.
\end{proof}

\begin{lem}
\label{lem:s_min_for_normal}
Suppose that $T:\mathcal{H}\rightarrow\mathcal{H}$ is a bounded linear operator.
If $T$ is normal, then
\begin{equation*}s_{\min}(T)=
\min \left\{ \left. |\lambda| \, \strut \right| 
\lambda\in\sigma(T)\right\} .
\end{equation*}
\end{lem}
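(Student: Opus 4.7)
The plan is to split into two cases based on whether $0\in\sigma(T)$, using the spectral radius formula (which applies cleanly to normal operators) together with the preceding lemma in the invertible case.

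First I would handle the easy case $0\notin\sigma(T)$. Here $T$ is invertible, so by Lemma~\ref{lem:s_min_for_invertible} we have $s_{\min}(T)=\|T^{-1}\|^{-1}$. Since $T$ is normal, so is $T^{-1}$, and for a normal operator the operator norm coincides with the spectral radius: $\|T^{-1}\|=r(T^{-1})=\max\{|\mu|:\mu\in\sigma(T^{-1})\}$. Combined with the spectral mapping theorem $\sigma(T^{-1})=\{1/\lambda:\lambda\in\sigma(T)\}$, this gives $\|T^{-1}\|=1/\min\{|\lambda|:\lambda\in\sigma(T)\}$, where the min exists because $\sigma(T)$ is compact and nonempty. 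Taking reciprocals yields the claim.

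Next I would handle $0\in\sigma(T)$. Here the right-hand side is $0$, so I must show $s_{\min}(T)=0$, i.e.\ that $T$ is not bounded below. The key fact is that a normal operator is bounded below if and only if it is invertible. Indeed, if $T$ is normal then $\|T v\|=\|T^*v\|$ for all $v\in\mathcal{H}$; hence if $T$ is bounded below, so is $T^*$, which forces $\ker T = \ker T^* = 0$ and shows $T$ has closed range (from being bounded below) with $\overline{\operatorname{ran} T}=(\ker T^*)^\perp=\mathcal{H}$, so $T$ is a bijection and thus invertible. Contrapositively, since $0\in\sigma(T)$ means $T$ is not invertible, it cannot be bounded below, so $\inf_{\|v\|=1}\|Tv\|=0$.

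There is no real obstacle here; the only slightly delicate point is the equivalence ``normal and bounded below implies invertible,'' which is a standard consequence of the identity $\|Tv\|=\|T^*v\|$ for normal $T$. Everything else reduces to the spectral radius formula and the spectral mapping theorem for the resolvent, both of which are classical for bounded normal operators on Hilbert space.
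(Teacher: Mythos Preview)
Your proof is correct and takes a genuinely different route from the paper's. The paper proves the two inequalities separately: for $s_{\min}(T)\leq\min_{\lambda\in\sigma(T)}|\lambda|$ it uses that every spectral value of a normal operator lies in the approximate point spectrum, producing unit vectors $\boldsymbol{v}_n$ with $\|T\boldsymbol{v}_n\|\to|\lambda|$; for the reverse inequality it invokes the full spectral theorem, modeling $T$ as a multiplication operator $M_f$ on $L^2(X,\mu)$ and computing $\|T\xi\|^2\geq a^2\|\xi\|^2$ directly from the essential range of $|f|$. Your argument instead splits on whether $0\in\sigma(T)$: in the invertible case you reduce to Lemma~\ref{lem:s_min_for_invertible} and the identity $\|T^{-1}\|=r(T^{-1})$ for normal operators, and in the non-invertible case you use the equivalence ``normal and bounded below $\Leftrightarrow$ invertible'' via $\|Tv\|=\|T^*v\|$. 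Your approach avoids the multiplication-operator form of the spectral theorem entirely and leverages the preceding lemma more fully; the paper's approach is more self-contained and makes both inequalities visible without a case split. Both are clean; yours is arguably more elementary if one grants the spectral radius formula for normal operators as standard.
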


\begin{proof}
Suppose $\lambda$ is an  element in the spectrum of $T$ of smallest absolute value.  Since $T$ is normal, $\lambda$ is in the approximate point spectrum so there is a sequence of unit vectors  $\boldsymbol{v}_n$ such that
\begin{equation*}
 \| T\boldsymbol{v}_n-\lambda \boldsymbol{v}_n \| \leq \frac{1}{n}.
\end{equation*}
Thus
\begin{equation*}
 \| T\boldsymbol{v}_n\| \leq |\lambda| + \frac{1}{n}
\end{equation*}
and so $s_{\min}(T)\leq |\lambda|$.

Now we prove the opposite inequality.  Since $s_{\min}(T)$ and $\sigma(T)$ are invariant under conjugation
by a unitary, we can use the spectral theorem \cite{Hall2013QuantumTheoryForMathematicians} to reduce to the case where $\mathcal{H}=L^{2}(X,\mu)$ and there is some bounded measurable $f:X\rightarrow \mathbb{C}$ so that $T\xi(x)=f(x)\xi(x)$. Let $[a,b]$ denoted the smallest closed interval that contains the essential range of $|f|$. Then $a$ is the smallest element in the spectrum of $|f|$, which by the spectral mapping theorem equals the
smallest absolute value of an element of the spectrum of $f$. Since 
\begin{equation*}
\left\Vert T\xi\right\Vert ^{2}=\int|f(x)|^{2}|\xi(x)|^{2}\,dx\geq a^{2}\int|\xi(x)|^{2}\,dx=a^{2}\left\Vert \xi\right\Vert ^{2}
\end{equation*}
we know that $s_{\min}(T)\geq a$. 
\end{proof}

Lemma~\ref{lem:s_min_for_normal} allows us to extend the definition of $s_{\min}$ to normal elements of a $C^*$-algebra.  In particular, we can now extend the definition of the Clifford pseudospectrum for the $C^*$-algebra setting.

\begin{defn}
Suppose   $A_{1},\dots,A_{d}$ are Hermitian elements of a unital $C^{*}$-algebra $\mathcal{A}$.  The \emph{Clifford pseudospectrum}
for $\mathbb{A}=(A_{1},\dots,A_{d})$ is the function $\boldsymbol{\lambda}\mapsto \mu_{\boldsymbol{\lambda}}^{\textup{C}}(\mathbb{A})$ where
\begin{equation*}
    \mu_{\boldsymbol{\lambda}}^{\textup{C}}(\mathbb{A}) = s_{\min}\left( L_{\boldsymbol{\lambda}}(\mathbb{A}) \right) .
\end{equation*}
\end{defn}

\begin{lem}
\label{lem:s_min_is_Lipschitz}
Suppose  $S,T:\mathcal{H}_{1}\rightarrow\mathcal{H}_{2}$ are both bounded
linear operators, then
\begin{equation*}
\left|s_{\min}(S)-s_{\min}(T)\right|\leq\left\Vert S-T\right\Vert .
\end{equation*}
\end{lem}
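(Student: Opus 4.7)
The plan is to work directly from the definition $s_{\min}(T) = \inf_{\|\boldsymbol{v}\|=1}\|T\boldsymbol{v}\|$ and exploit the reverse triangle inequality on the norms $\|S\boldsymbol{v}\|$ and $\|T\boldsymbol{v}\|$. No spectral theory or invertibility assumption is needed; the inequality will hold purely because $s_{\min}$ is an infimum of a family of $1$-Lipschitz functions of the operator.

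Concretely, I would first fix an arbitrary unit vector $\boldsymbol{v}\in\mathcal{H}_{1}$ and write
\begin{equation*}
\|S\boldsymbol{v}\| \leq \|T\boldsymbol{v}\| + \|(S-T)\boldsymbol{v}\| \leq \|T\boldsymbol{v}\| + \|S-T\|.
\end{equation*}
Since this holds for every unit $\boldsymbol{v}$, taking the infimum of both sides over the unit sphere yields
\begin{equation*}
s_{\min}(S) \leq s_{\min}(T) + \|S-T\|.
\end{equation*}
Swapping the roles of $S$ and $T$ in the computation gives $s_{\min}(T)\leq s_{\min}(S) + \|S-T\|$, and the two inequalities combine to produce $|s_{\min}(S)-s_{\min}(T)|\leq \|S-T\|$.

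There is no real obstacle here: the only subtlety is making sure to take the infimum on the correct side of the inequality (one cannot replace $\|T\boldsymbol{v}\|$ by $s_{\min}(T)$ before the inf, since the bound $\|S\boldsymbol{v}\|\leq \|T\boldsymbol{v}\|+\|S-T\|$ goes in the direction one wants). Otherwise, this is essentially the standard argument that the infimum of a family of $1$-Lipschitz functions is $1$-Lipschitz, applied pointwise in the operator variable.
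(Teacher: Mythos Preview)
Your proposal is correct and is essentially the same argument as the paper's: both use the triangle inequality $\|S\boldsymbol{v}\|\leq\|T\boldsymbol{v}\|+\|S-T\|$ and then pass to the infimum, finishing by symmetry. The only cosmetic difference is that the paper chooses an $\epsilon$-near-minimizer for $T$ and lets $\epsilon\to 0$, whereas you take the infimum on both sides directly; your handling is slightly cleaner but the substance is identical.
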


\begin{proof}
Let $\epsilon>0$ be given. Then there is a unit vector $\boldsymbol{v}$
so that 
\begin{equation*}
\|T\boldsymbol{v}\|\leq s_{\min}(T)+\epsilon.
\end{equation*}
Then 
\begin{equation*}
s_{\min}(S)\leq\|S\boldsymbol{v}\|
\leq
\|T\boldsymbol{v}\|+\|(S-T)\boldsymbol{v}\|\leq s_{\min}(T)+\epsilon+\|S-T\|
\end{equation*}
 proving
\begin{equation*}
s_{\min}(S)-s_{\min}(T)\leq\|S-T\|+\epsilon.
\end{equation*}
As this is true for all $\epsilon>0$, we have shown 
\begin{equation*}
s_{\min}(S)-s_{\min}(T) \leq \|S-T\|.
\end{equation*}
We are done, by symmetry.
\end{proof}

\begin{rem}
Unlike the case in finite dimensions, $s_{\min}(T)\neq0$ does not
always imply $T$ is invertible. However, when $T^{*}=T$ it is true
that $s_{\min}(T)\neq0$ if and only if $T^{-1}$ exists.
\end{rem}

\begin{lem}
If $T:\mathcal{H}_{1}\rightarrow\mathcal{H}_{2}$ and $S:\mathcal{H}_{2}\rightarrow\mathcal{H}_{3}$
are bounded linear operators then
\begin{align*}
s_{\min}(ST) & \geq s_{\min}(S)s_{\min}(T)
\end{align*}
\end{lem}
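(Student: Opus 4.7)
The plan is to exploit the variational characterization $s_{\min}(T)=\inf_{\|\boldsymbol{v}\|=1}\|T\boldsymbol{v}\|$ that serves as our definition, together with its homogeneous reformulation already used inside the proof of Lemma~\ref{lem:s_min_for_invertible}. The first step is to upgrade the infimum definition to a pointwise lower bound: for any bounded operator $R:\mathcal{H}\to\mathcal{K}$ and any vector $\boldsymbol{w}$, one has
\begin{equation*}
\|R\boldsymbol{w}\|\geq s_{\min}(R)\,\|\boldsymbol{w}\|,
\end{equation*}
which is trivial if $\boldsymbol{w}=0$ and follows by normalizing $\boldsymbol{w}/\|\boldsymbol{w}\|$ otherwise. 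This observation is the entire engine of the proof.

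Next I would apply this inequality twice. Given a unit vector $\boldsymbol{v}\in\mathcal{H}_1$, set $\boldsymbol{w}=T\boldsymbol{v}\in\mathcal{H}_2$. Applying the pointwise bound first to $S$ at $\boldsymbol{w}$ and then to $T$ at $\boldsymbol{v}$ yields
\begin{equation*}
\|ST\boldsymbol{v}\|=\|S\boldsymbol{w}\|\geq s_{\min}(S)\,\|\boldsymbol{w}\|=s_{\min}(S)\,\|T\boldsymbol{v}\|\geq s_{\min}(S)\,s_{\min}(T).
\end{equation*}
Taking the infimum of the left-hand side over all unit vectors $\boldsymbol{v}$ gives the desired conclusion $s_{\min}(ST)\geq s_{\min}(S)\,s_{\min}(T)$.

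There is no real obstacle here; the only thing to watch is that one does not need to worry about division by zero when $T\boldsymbol{v}=0$, because we never divide — the inequality $\|S\boldsymbol{w}\|\geq s_{\min}(S)\|\boldsymbol{w}\|$ holds for every $\boldsymbol{w}$, including $\boldsymbol{w}=0$, so the chain of inequalities above is valid uniformly in $\boldsymbol{v}$. An $\epsilon$-style argument as in Lemma~\ref{lem:s_min_is_Lipschitz} is not required, since we are not trying to realize either infimum; we only need the one-sided lower bound that the infimum enforces.
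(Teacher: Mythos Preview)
Your proof is correct and follows essentially the same approach as the paper's: both use the pointwise lower bound $\|R\boldsymbol{w}\|\geq s_{\min}(R)\,\|\boldsymbol{w}\|$ implied by the infimum definition and chain it through $S$ and $T$. The paper writes the chain as a product of ratios $\frac{\|ST\boldsymbol{v}\|}{\|T\boldsymbol{v}\|}\cdot\frac{\|T\boldsymbol{v}\|}{\|\boldsymbol{v}\|}$ and therefore has to split off the case where $T$ has nontrivial kernel, whereas your formulation avoids division and handles all $\boldsymbol{v}$ uniformly.
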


\begin{proof}
Recall that 
\begin{align*}
s_{\min}(T) & =\inf_{\boldsymbol{v}\neq0}\frac{\|T\boldsymbol{v}\|}{\|\boldsymbol{v}\|}.
\end{align*}
If $T$ has no kernel then, for every nonzero $\boldsymbol{v}$, we have
\begin{equation*}
\frac{\|ST\boldsymbol{v}\|}{\|\boldsymbol{v}\|}=\frac{\|ST\boldsymbol{v}\|}{\|T\boldsymbol{v}\|}\frac{\|T\boldsymbol{v}\|}{\|\boldsymbol{v}\|}\geq s_{\min}(S)s_{\min}(T)
\end{equation*}
 so 
\begin{align*}
s_{\min}(ST) & \geq s_{\min}(S)s_{\min}(T).
\end{align*}
If $T$ has a kernel then $ST$ has a kernel so both $s_{\min}(ST)$
and $s_{\min}(T)$ are zero and we are done for trivial reasons.
\end{proof}

\begin{rem*}
One might think that $T$ and $T^{*}$ and
\[
\left[\begin{array}{cc}
0 & T^{*}\\
T & 0
\end{array}\right]
\]
 all have the same $s_{\min}$.  After all, this is true in the finite-dimensional case. However, if we let $T$ be the forward
shift, then  $ s_{\min}(T)=1$
since 
$\left\Vert T\boldsymbol{v}\right\Vert =\left\Vert \boldsymbol{v}\right\Vert $
for all $\boldsymbol{v}$ (that is, $T$ is an isometry). In contrast,
there is a unit vector $\boldsymbol{v}$ so that that the backwards
shift sends $\boldsymbol{v}$ so zero, so that $\left\Vert T^*\boldsymbol{v}\right\Vert =0$.
Thus also 
\[
\left[\begin{array}{cc}
0 & T^{*}\\
T & 0
\end{array}\right]\left[\begin{array}{c}
0\\
\boldsymbol{v}
\end{array}\right]=\left[\begin{array}{c}
0\\
0
\end{array}\right]
\]
so we can conclude
\[
s_{\min}\left[\begin{array}{cc}
0 & T^{*}\\
T & 0
\end{array}\right]=s_{\min}(T^{*})=0.
\]
Finally, notice that $\sqrt{T^{*}T}=I$ while $\sqrt{TT^{*}}$
has a kernel, so
\[
s_{\min}\left(|T^{*}|\right)\neq s_{\min}\left(|T|\right)
\]
in infinite dimensions. 
\end{rem*}

   For two Hilbert spaces $\mathcal{H}_i, \ i=1,2,$
   let $\mathcal{L}(\mathcal{H}_1,\mathcal{H}_2)$ denote
   the set of all bounded linear operators from 
   $\mathcal{H}_1$ to $\mathcal{H}_2.$
   Recall that for $T\in\mathcal{L}(\mathcal{H}_1,\mathcal{H}_2),$ $|T|\in\mathcal{L}(\mathcal{H}_1)$ denotes the modulus of $T$, 
   that is the unique positive semi-definite operator so that 
   $|T|^2=T^*T$ and $\ker(T)=\ker(|T|).$

\begin{lem}
    If $T:\mathcal{H}_{1}\rightarrow\mathcal{H}_{2}$ 
    is a bounded linear operator then
    \begin{equation*}
             s_{\min}(T)=s_{\min}(|T|)=\sqrt{s_{\min}(T^{*}T)} .
    \end{equation*}
\end{lem}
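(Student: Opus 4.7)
The plan is to prove the two equalities separately, the first by the fundamental polar identity for $|T|$, and the second by reducing to normal operators and applying Lemma~\ref{lem:s_min_for_normal}.

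For the first equality $s_{\min}(T) = s_{\min}(|T|)$, I would start from the definition of $|T|$: since $|T|$ is positive and satisfies $|T|^2 = T^*T$, for any vector $\boldsymbol{v}\in\mathcal{H}_1$ we have
\begin{equation*}
\|T\boldsymbol{v}\|^2 = \langle T^*T\boldsymbol{v},\boldsymbol{v}\rangle = \langle |T|^2 \boldsymbol{v},\boldsymbol{v}\rangle = \langle |T|\boldsymbol{v},|T|\boldsymbol{v}\rangle = \||T|\boldsymbol{v}\|^2,
\end{equation*}
using self-adjointness of $|T|$. Hence $\|T\boldsymbol{v}\| = \||T|\boldsymbol{v}\|$ for every $\boldsymbol{v}$, and taking the infimum over unit vectors gives $s_{\min}(T) = s_{\min}(|T|)$ immediately.

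For the second equality $s_{\min}(|T|) = \sqrt{s_{\min}(T^*T)}$, I would observe that both $|T|$ and $T^*T$ are positive (hence normal) elements of $\mathcal{L}(\mathcal{H}_1)$, so Lemma~\ref{lem:s_min_for_normal} applies to each. Since their spectra lie in $[0,\infty)$, this gives
\begin{equation*}
s_{\min}(|T|) = \min \sigma(|T|), \qquad s_{\min}(T^*T) = \min \sigma(T^*T).
\end{equation*}
Because $|T| = \sqrt{T^*T}$ is defined through the continuous functional calculus applied to the nonnegative function $t\mapsto\sqrt{t}$ on $\sigma(T^*T)$, the spectral mapping theorem gives $\sigma(|T|) = \{\sqrt{\lambda} : \lambda \in \sigma(T^*T)\}$. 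Taking minima and using that the square-root is monotone yields $\min\sigma(|T|) = \sqrt{\min\sigma(T^*T)}$, which is exactly the claimed identity.

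There is no real obstacle; the only thing to be careful about is that the codomains match so that Lemma~\ref{lem:s_min_for_normal} is applicable, which it is because $|T|$ and $T^*T$ both map $\mathcal{H}_1$ to itself. The argument goes through verbatim in the $C^*$-algebra setting as well, since both equalities reduce to statements about $|T|$ and $T^*T$ via the spectral theorem used in Lemma~\ref{lem:s_min_for_normal}.
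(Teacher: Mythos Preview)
Your proof is correct and follows essentially the same approach as the paper: the first equality via the identity $\|T\boldsymbol{v}\|=\|\,|T|\boldsymbol{v}\,\|$, and the second via Lemma~\ref{lem:s_min_for_normal} combined with the spectral mapping theorem. You have simply supplied more detail than the paper's terse version.
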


\begin{proof}
  Since $\|  T\boldsymbol{v}  \|=\| \, |T|\boldsymbol{v}\, \|$ for all $\boldsymbol{v}\in\mathcal{H}_1,$
  the consequence that $s_{\min}(T)=s_{\min}(|T|)$ is straightforward.  The equality of $s_{\min}(|T|)$ with $\sqrt{s_{\min}(T^{*}T)}$ follows from Lemma~\ref{lem:s_min_for_normal} and the spectral mapping
  theorem.
  
%  On other hand since $|| \, |T|x \,\|^2\le ||T^* T x||,$ %for all $x\in\mathcal{H}_1,$
%  it implies $s_{\min}(|T|)\le \sqrt{s_{\min}(T^* T)}.$
%  If $s_{\min}(T^* T)=0,$ then $s_{\min}(T)=0.$ Thus, it %remains to prove that the equality holds in the case that %$s_{\min}(T^*T)>0.$ 
%  Let us assume that 
%  $0\le\gamma:=s_{min}(|T|)< \sqrt{s_{min}(T^* %T)}:=\sqrt{\alpha}$
%  and let $n$ be an integer so that $\gamma+\frac{1}%{n}<\sqrt{\alpha}.$
%  Thus, there exists $u_n\in\mathcal{H}_1, \ ||u_n||=1$
%  so that $|| |T|u_n||\le\gamma+\frac{1}{n},$
%  or equivalently
%  $\langle T^* T u_n, u_n \rangle=
%  || |T|u_n||^2\le\gamma^2+\frac{2\gamma}{n}+\frac{1}%{n^2},$
%  and therefore
%  $$\langle (T^*T-\gamma^2) u_n, u_n \rangle\le \frac{2
%\gamma}{n}+\frac{1}{n^2},$$
%  which implies that $\gamma^2\in\sigma_{less}(|T|^2),$
%  and thus $\gamma^2\in\sigma(|T|^2).$
%  On other hand, according the the Spectral Mapping %Theorem,
%  $\sigma(\sqrt{|T|^2-\gamma^2})\subset [\sqrt{\alpha-\gamma^2},+\infty)$
%  and $\sqrt{\alpha-\gamma^2}>0,$ and thus $\sqrt{|T|^2-\gamma^2}$ must be invertible, which contradicts the above inequality.
\end{proof}

With these lemmas, we can now extend some of the result on Clifford spectrum and pseudospectrum that have
been previously worked out in the case of finite matrices \cite{cerjan_L_V_2022quadraticPS,LoringPseudospectra}.
Theorem~\ref{thm:CliffPS_Lipschitz_near_abs(lambda)}
is a generalization of Lemma 2.3 of \cite{PalYak2017InfDimOper_PS}. That result handles one general operator, or equivalently two Hermitian operators.

\begin{thm}
\label{thm:CliffPS_Lipschitz_near_abs(lambda)}
Suppose   $A_{1},\dots,A_{d}$ are Hermitian elements of a unital $C^{*}$-algebra $\mathcal{A}$.  The Clifford pseudospectrum
of $\mathbb{A}$ is Lipschitz, specifically with
\begin{equation}
\label{eqn:Lipschitz_in_Lambda}
  \left| \mu_{\boldsymbol{\lambda}}^{C}(A_{1},\dots,A_{d})-\mu_{\boldsymbol{\nu}}^{C}(A_{1},\dots,A_{d}) \right|
    \leq
    \| \boldsymbol{\lambda} - \boldsymbol{\nu} \|
\end{equation}
where the norm is taken to be the Euclidean norm on $\mathbb{R}^n$.  Moreover
\begin{equation}
\label{eqn:CliffPS_close_to_abs(lambda)}
\left|\mu_{\boldsymbol{\lambda}}^{C}(\mathbb{A})-\|\boldsymbol{\lambda}\|\right| \leq
\left \| L_{\boldsymbol{0}}(\mathbb{A}) \right \|.
\end{equation}
\end{thm}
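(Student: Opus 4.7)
The plan is to reduce both inequalities to two basic facts: the difference $L_{\boldsymbol{\lambda}}(\mathbb{A}) - L_{\boldsymbol{\nu}}(\mathbb{A})$ does not involve the $A_j$ at all but only the shift vector, and for any $\boldsymbol{\mu} \in \mathbb{R}^d$ the Clifford combination
$$N_{\boldsymbol{\mu}} := \sum_{j=1}^{d} \mu_j \, (I \otimes \Gamma_j) \in M_{2^{\lfloor d/2 \rfloor}}(\mathcal{A})$$
is self-adjoint and squares to $\|\boldsymbol{\mu}\|^2 (I\otimes I)$.

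First I would verify that $\|N_{\boldsymbol{\mu}}\| = s_{\min}(N_{\boldsymbol{\mu}}) = \|\boldsymbol{\mu}\|$. The Clifford relations $\Gamma_j^* = \Gamma_j$, $\Gamma_j^2 = 1$, and $\Gamma_j\Gamma_k = -\Gamma_k\Gamma_j$ for $j\neq k$ yield $N_{\boldsymbol{\mu}}^2 = \|\boldsymbol{\mu}\|^2 (I\otimes I)$ once the off-diagonal commutators cancel in pairs. The $C^*$-identity applied to the self-adjoint element $N_{\boldsymbol{\mu}}$ then gives $\|N_{\boldsymbol{\mu}}\|^2 = \|N_{\boldsymbol{\mu}}^2\| = \|\boldsymbol{\mu}\|^2$, while $\sigma(N_{\boldsymbol{\mu}}) \subseteq \{\pm\|\boldsymbol{\mu}\|\}$ together with Lemma~\ref{lem:s_min_for_normal} gives $s_{\min}(N_{\boldsymbol{\mu}}) = \|\boldsymbol{\mu}\|$ (trivially when $\boldsymbol{\mu} = \boldsymbol{0}$, and otherwise because every point of the spectrum has absolute value $\|\boldsymbol{\mu}\|$).

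With these computations in hand, inequality~\eqref{eqn:Lipschitz_in_Lambda} is immediate: since $L_{\boldsymbol{\lambda}}(\mathbb{A}) - L_{\boldsymbol{\nu}}(\mathbb{A}) = N_{\boldsymbol{\nu}-\boldsymbol{\lambda}}$, Lemma~\ref{lem:s_min_is_Lipschitz} yields
$$\left|\mu_{\boldsymbol{\lambda}}^C(\mathbb{A}) - \mu_{\boldsymbol{\nu}}^C(\mathbb{A})\right| \leq \|L_{\boldsymbol{\lambda}}(\mathbb{A}) - L_{\boldsymbol{\nu}}(\mathbb{A})\| = \|\boldsymbol{\lambda} - \boldsymbol{\nu}\|.$$
For inequality~\eqref{eqn:CliffPS_close_to_abs(lambda)} I would write $L_{\boldsymbol{\lambda}}(\mathbb{A}) = L_{\boldsymbol{0}}(\mathbb{A}) - N_{\boldsymbol{\lambda}}$, so that $L_{\boldsymbol{\lambda}}(\mathbb{A}) - (-N_{\boldsymbol{\lambda}}) = L_{\boldsymbol{0}}(\mathbb{A})$, and apply Lemma~\ref{lem:s_min_is_Lipschitz} once more, using $s_{\min}(-N_{\boldsymbol{\lambda}}) = s_{\min}(N_{\boldsymbol{\lambda}}) = \|\boldsymbol{\lambda}\|$ from the previous step.

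Rather than a true obstacle, the one point requiring care is that Lemma~\ref{lem:s_min_is_Lipschitz} was stated for operators between Hilbert spaces, whereas the proof above applies it to elements of $M_{2^{\lfloor d/2 \rfloor}}(\mathcal{A})$. This is handled either by invoking a faithful representation of $\mathcal{A}$ on some Hilbert space (so that $s_{\min}$ of an element agrees with $s_{\min}$ of its image) or directly from the algebraic characterisation $s_{\min}(T)^2 = \min\sigma(T^*T)$, whose Lipschitz dependence on $T$ follows from standard perturbation bounds for the bottom of the spectrum of a positive element.
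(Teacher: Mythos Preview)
Your proof is correct and follows essentially the same route as the paper's: your element $N_{\boldsymbol{\mu}}$ is precisely what the paper writes as $L_{\boldsymbol{0}}(\mu_1 I,\dots,\mu_d I)$, and both arguments feed the identities $\|N_{\boldsymbol{\mu}}\|=s_{\min}(N_{\boldsymbol{\mu}})=\|\boldsymbol{\mu}\|$ and $L_{\boldsymbol{\lambda}}(\mathbb{A})-L_{\boldsymbol{\nu}}(\mathbb{A})=N_{\boldsymbol{\nu}-\boldsymbol{\lambda}}$ into Lemma~\ref{lem:s_min_is_Lipschitz}. Your closing remark about transporting Lemma~\ref{lem:s_min_is_Lipschitz} to the $C^{*}$-algebra setting via a faithful representation (or the spectral formula for $s_{\min}$) is a nice point that the paper leaves implicit.
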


\begin{proof}
These statements all follow from the fact that $s_{\min}(A)$ is Lipshitz in the operator norm (Lemma~\ref{lem:s_min_is_Lipschitz}).  This result is a generalization of results in Section 7 of \cite{LoringPseudospectra}, and is proven by related methods.
The essential calculations are
\begin{equation*}
\sigma\left(L_{\boldsymbol{0}}\left(\lambda_{1}I,\cdots,\lambda_{d}I\right)\right)=\left\{ \left\Vert \boldsymbol{\lambda}\right\Vert ,-\left\Vert \boldsymbol{\lambda}\right\Vert \right\} 
\end{equation*}
and its corollary
\begin{equation*}
\left\Vert L_{\boldsymbol{0}}\left(\lambda_{1}I,\cdots,\lambda_{d}I\right)\right\Vert =\left\Vert \boldsymbol{\lambda}\right\Vert .
\end{equation*}
For (\ref{eqn:Lipschitz_in_Lambda}) note that 
\begin{equation*}
   L_{\boldsymbol{\lambda}}\left(A_1,\cdots,A_d\right) -
   L_{\boldsymbol{\nu}}\left(A_1,\cdots,A_d\right)
   =
   L_{\boldsymbol{0}}\left((\nu_1-\lambda_1)I,\cdots,(\nu_{d}-\lambda_d)I\right)
\end{equation*}
which implies that
\begin{equation*}
  \left\| L_{\boldsymbol{\lambda}}\left(A_1,\cdots,A_d\right) -
   L_{\boldsymbol{\nu}}\left(A_1,\cdots,A_d\right) \right \|
   = \| \boldsymbol{\lambda} - \boldsymbol{\nu} \|.
\end{equation*}
For (\ref{eqn:CliffPS_close_to_abs(lambda)}), we know that
\begin{equation*}
    s_{\min} \left( L_{\boldsymbol{0}}\left(\lambda_{1}I,\cdots,\lambda_{d}I\right) \right) 
    = \| \boldsymbol{\lambda}\|
\end{equation*}
and
\begin{equation*}
    \left \| L_{\boldsymbol{0}}\left(\lambda_{1}I,\cdots,\lambda_{d}I\right) +
    L_{\boldsymbol{\lambda}}\left(A_1,\cdots,A_d\right)
    \right \| =     \left \|  L_{\boldsymbol{0}}\left(A_{1},\cdots,A_{d}\right) \right \| .
\end{equation*}
\end{proof}

\begin{cor}
The Clifford spectrum for  a $d$-tuple of Hermitian elements of a $C^*$-algebra is always compact.
\end{cor}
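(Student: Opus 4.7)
The plan is to identify the Clifford spectrum as the zero set of the continuous function $\boldsymbol{\lambda}\mapsto \mu^C_{\boldsymbol{\lambda}}(\mathbb{A})$ and then use the two conclusions of Theorem~\ref{thm:CliffPS_Lipschitz_near_abs(lambda)} to get closedness and boundedness respectively.

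First I would observe that each summand $(A_j - \lambda_j I)\otimes \Gamma_j$ is self-adjoint in $M_{2^{\lfloor d/2\rfloor}}(\mathcal{A})$, since both $A_j-\lambda_j I$ and $\Gamma_j$ are Hermitian. Hence $L_{\boldsymbol{\lambda}}(\mathbb{A})$ is self-adjoint. By the remark following Lemma~\ref{lem:s_min_is_Lipschitz} (or equivalently by Lemma~\ref{lem:s_min_for_normal} applied in the enveloping von Neumann algebra, noting that $s_{\min}$ depends only on the element and not on the ambient $C^*$-algebra by spectral permanence), a self-adjoint element $T$ satisfies $s_{\min}(T)=0$ if and only if $T$ is not invertible. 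Therefore
\begin{equation*}
\Lambda(\mathbb{A})=\left\{\boldsymbol{\lambda}\in\mathbb{R}^d \,\middle|\, \mu^C_{\boldsymbol{\lambda}}(\mathbb{A})=0\right\}.
\end{equation*}

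Closedness then falls out immediately from the Lipschitz estimate (\ref{eqn:Lipschitz_in_Lambda}): the map $\boldsymbol{\lambda}\mapsto \mu^C_{\boldsymbol{\lambda}}(\mathbb{A})$ is continuous on $\mathbb{R}^d$, and $\Lambda(\mathbb{A})$ is the preimage of the closed set $\{0\}$. For boundedness, I would appeal to (\ref{eqn:CliffPS_close_to_abs(lambda)}): for any $\boldsymbol{\lambda}\in \Lambda(\mathbb{A})$ we have $\mu^C_{\boldsymbol{\lambda}}(\mathbb{A})=0$, and so
\begin{equation*}
\|\boldsymbol{\lambda}\| = \bigl|\mu^C_{\boldsymbol{\lambda}}(\mathbb{A})-\|\boldsymbol{\lambda}\|\bigr| \leq \|L_{\boldsymbol{0}}(\mathbb{A})\|.
\end{equation*}
Thus $\Lambda(\mathbb{A})$ is contained in the closed Euclidean ball of radius $\|L_{\boldsymbol{0}}(\mathbb{A})\|$ in $\mathbb{R}^d$.

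Having shown that $\Lambda(\mathbb{A})$ is both closed and bounded in $\mathbb{R}^d$, the Heine--Borel theorem finishes the argument. There is no real obstacle here; the only subtlety is to record that for the self-adjoint element $L_{\boldsymbol{\lambda}}(\mathbb{A})$, the condition $s_{\min}=0$ coincides with non-invertibility, so that the Clifford spectrum really is a zero set of a continuous function rather than just a sublevel set.
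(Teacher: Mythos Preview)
Your proof is correct and follows essentially the same approach as the paper: use the Lipschitz estimate~(\ref{eqn:Lipschitz_in_Lambda}) for continuity (hence closedness of the zero set), and use~(\ref{eqn:CliffPS_close_to_abs(lambda)}) to bound $\|\boldsymbol{\lambda}\|$ by $\|L_{\boldsymbol{0}}(\mathbb{A})\|$ on the spectrum. You add a helpful justification that $L_{\boldsymbol{\lambda}}(\mathbb{A})$ is self-adjoint so that $s_{\min}=0$ really is equivalent to non-invertibility, a point the paper's proof leaves implicit.
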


\begin{proof}
Since Lipshcitz implies continuous we get continuity from Equation~\ref{eqn:Lipschitz_in_Lambda}.  Equation~\ref{eqn:CliffPS_close_to_abs(lambda)} tells us that then
$\left \| \boldsymbol{\lambda} \| >  \| L_{\boldsymbol{0}}(\mathbb{A}) \right \|$
we cannot have $\mu_{\boldsymbol{\lambda}}^{C}(\mathbb{A})=0$.  Therefore
the Clifford spectrum of $(A_{1},\dots,A_{d})$ is a closed subset of the ball at the origin of radius 
$\left \| L_{\boldsymbol{0}}(A_1,\dots,A_d) \right \|$.
\end{proof}

\section{Other forms of multivariable pseudospectrum}

\subsection{The quadratic pseudospectrum and more}

There are many ways to define a pseudospectrum for 
$\mathbb{A}=(A_{1},\dots,A_{d})$ that are Hermitian.  If these are operators on Hilbert space $\mathcal{H}$,  Mumford \cite{mumford2023numbers_and_beyond}
suggests associating to $\boldsymbol{\lambda}$  the number
\begin{equation}
\label{eqn:mumford_PS}
\min\left\{ \left. \max\left\Vert A_{j}\boldsymbol{v}-\lambda_{j}\boldsymbol{v}\right\Vert 
\, \strut \right|\, 
\|\boldsymbol{v}\|=1\text{ and }\textup{E}_{\boldsymbol{v}}[A_{j}]=\lambda_{j}\text{ for all }j\right\} 
\end{equation}
 where $\textup{E}_{\boldsymbol{v}}[A_{j}]=\left\langle A_{j}\boldsymbol{v},\boldsymbol{v}\right\rangle $
is the expectation for the observable $A_{j}$ when the system is
in state $\boldsymbol{v}$.  This is expected \cite[\S 1]{LoringPseudospectra} to be a very difficult, but important, minimization problem related to joint measurement.  

Fortunately, there
is a slight modification that can be made to Equation~\ref{eqn:mumford_PS} that turns it into something very computable, as the minimization problem leads to a number equal to the smallest spectral value of a Hermitian matrix.  The \textit{quadratic pseudospectrum} \cite{cerjan_L_V_2022quadraticPS} is defined as
\begin{equation}
    \mu_{\boldsymbol{\lambda}}^{\textup{Q}}(A_{1},\dots,A_{d})=\min\left\{ \left.\sqrt{\sum_{j}\left\Vert A_{j}\boldsymbol{v}-\lambda_{j}\boldsymbol{v}\right\Vert ^{2}}\,\right|\,\|\boldsymbol{v}\|=1\right\} .
\end{equation}
The practical way to compute this is to use the known equality \cite{cerjan_L_V_2022quadraticPS}
\begin{equation}
    \mu_{\boldsymbol{\lambda}}^{\textup{Q}}(A_{1},\dots,A_{d})=
    \sqrt{s_{\min}\left(Q_{\boldsymbol{\lambda}}\left(\mathbb{A}\right)\right)}
\end{equation}
where
\begin{equation}
    Q_{\boldsymbol{\lambda}}\left(\mathbb{A}\right)=\sum_{j}\left(A_{j}-\lambda_{j}\right)^{2} .
\end{equation}
This can be defined also in the case where the $A_j$ are Hermitian elements in
a unital $C^*$-algebra.  

We can also select a bump function $g:\mathbb{R}\rightarrow\mathbb{R}$
with $0\leq g\leq1$ and $g(0)=1$, some manner of a windowing function.
Lin (in [19]) looks at 
\begin{equation*}
1 - \left\Vert W(\mathbb{A})\right\Vert 
\end{equation*}
where 
\begin{equation*}
W(\mathbb{A})=g\left(A_{1}-\lambda_{1}\right)g\left(A_{2}-\lambda_{2}\right)\cdots g\left(A_{d}-\lambda_{d}\right).
\end{equation*}
Notice there needs to be some choice of the order in the product.
This is related to the windowed LDOS 
\cite{loringLuWatson2021locality} that looks at the trace of
$\left(W(\mathbb{A})\right)^*W(\mathbb{A})$.  It can be difficult to compute $g\left(A_{j}-\lambda_{1}\right)$, but it should be should be possible to numerically compute this in settings
where some of the matrices are diagonal.
For theoretical calculations, this can be a practical form of
pseudospectrum \cite{lin2024almost_commuting_and_measurement}.

\subsection{Relation between the Clifford and quadratic pseudospectrum}

It is expected that all variations on multivariable operator pseudospectrum will be related somehow.  Indeed, they should all be equal in the commutative case, and be close when the $A_j$ have small commutators.  This is known in one case, in that the Clifford and quadratic are
of bounded distance apart.  That is, we know \cite{cerjan_L_V_2022quadraticPS} that
\begin{equation}
\label{eqn:closeness_of_PS}
\left |
   \biggl( \mu_{\boldsymbol{\lambda}}^{C}\left(\mathbb{A}\right) \biggr )^2
    -
    \biggl( \mu_{\boldsymbol{\lambda}}^{Q}\left(\mathbb{A}\right) \biggr )^2
    \right |
    \leq 
    \sum_{j<k} \| [A_j, A_k]  \| .
\end{equation}

It is becoming clear that one cannot make do with a single form of pseudospectrum when studying physical systems.
The Clifford pseudospectrum is related to $K$-theory and
is finding applications in photonics \cite{cerjan_operator_Maxwell_2022,CerjLorShuba2024LocalMarkersCrystalline}, acoustics \cite{Cheng2023}, aperiodic structures \cite{fulga_aperiodic_2016}, nonlinear systems \cite{wong_nonlinear_2023}, and even non-Hermitian systems \cite{cerjan2023spectral,chadha_prb_2024,dixon_classifying_2023,
liu_prb_2023,Ochkan2024}.
On the other hand, showing that points in the Clifford spectrum correspond to states approximately localized in energy and position seems to require using something like Equation~\ref{eqn:closeness_of_PS}.

\subsection{Symmetry in the quadratic pseudospectrum}

There are symmetry properties in the Clifford spectrum that must follow from symmetry properties in $\mathbb{A}$ \cite{CerjanLoring2023EvenSpheres}.  Here we prove the same implication, but for the quadratic case.  

\begin{lem}
\label{lem:Unitary_action_on_quadratic} 
Suppose $\left(A_{1},\dots,A_{d}\right)$ is a $d$-tuple of Hermitian
elements in the unital $C^*$-algebra $\mathcal{A}$ and that $U\in O(d)$.
Suppose $\boldsymbol{\mathbb{\lambda}}\in\mathbb{R}^{d}$. The $d$
element
\begin{equation*}
\hat{A}_{j}=\sum_{s}u_{js}A_{s}
\end{equation*}
are also Hermitian, and 
\begin{equation*}
\mu_{U\boldsymbol{\lambda}}^{Q}\left(\hat{A}_{1},\dots,\hat{A}_{d}\right)=\mu_{\boldsymbol{\lambda}}^{Q}\left(A_{1},\dots,A_{d}\right).
\end{equation*}
\end{lem}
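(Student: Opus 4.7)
The plan is to reduce the claim to an equality of the underlying quadratic elements themselves: namely
\begin{equation*}
Q_{U\boldsymbol{\lambda}}\bigl(\hat{A}_{1},\dots,\hat{A}_{d}\bigr) \;=\; Q_{\boldsymbol{\lambda}}\bigl(A_{1},\dots,A_{d}\bigr).
\end{equation*}
Once this is established, the conclusion is immediate from the identity $\mu_{\boldsymbol{\lambda}}^{Q}(\mathbb{A})=\sqrt{s_{\min}(Q_{\boldsymbol{\lambda}}(\mathbb{A}))}$ already recorded above, since equal elements obviously have equal $s_{\min}$.

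The Hermiticity of $\hat{A}_{j}$ is automatic because the entries $u_{js}$ are real and each $A_{s}$ is Hermitian. For the main identity, I would introduce the shorthand $B_{s}=A_{s}-\lambda_{s}I$, observe that
\begin{equation*}
\hat{A}_{j}-(U\boldsymbol{\lambda})_{j} I \;=\; \sum_{s}u_{js}\bigl(A_{s}-\lambda_{s}I\bigr) \;=\; \sum_{s}u_{js}B_{s},
\end{equation*}
and then expand
\begin{equation*}
\sum_{j}\Bigl(\sum_{s}u_{js}B_{s}\Bigr)^{2} \;=\; \sum_{s,t}\Bigl(\sum_{j}u_{js}u_{jt}\Bigr)B_{s}B_{t}.
\end{equation*}
The key step is invoking the orthogonality relation $\sum_{j}u_{js}u_{jt}=(U^{\top}U)_{st}=\delta_{st}$, which collapses the double sum to $\sum_{s}B_{s}^{2}=Q_{\boldsymbol{\lambda}}(A_{1},\dots,A_{d})$. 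This is exactly the desired equality of quadratic elements.

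I do not expect any real obstacle here; the argument is a purely algebraic computation that works verbatim in any unital $C^{*}$-algebra, and it is actually cleaner than the analogous argument for the Clifford pseudospectrum (Lemma~\ref{lem:Unitary_action_on_Clifford}), because here one does not have to conjugate by a unitary built from the $\Gamma_{j}$ — the orthogonal change of variables absorbs into the squared sum on its own. Applying $s_{\min}$ to both sides and taking square roots finishes the proof.
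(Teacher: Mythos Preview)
Your proof is correct and follows essentially the same approach as the paper: both arguments establish the identity $Q_{U\boldsymbol{\lambda}}(\hat{A}_{1},\dots,\hat{A}_{d})=Q_{\boldsymbol{\lambda}}(A_{1},\dots,A_{d})$ via the orthogonality relation $\sum_{j}u_{js}u_{jt}=\delta_{st}$. Your version is slightly more economical in that you shift first (setting $B_{s}=A_{s}-\lambda_{s}I$) and apply orthogonality once, whereas the paper expands the square into three pieces and treats each separately, but the substance is the same.
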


\begin{proof}
The elements $\hat{A}_{j}$ can be shown to  be Hermitian, by the same
proof  \cite[\S 2]{CerjanLoring2023EvenSpheres} as for the matrix case.
We find
\begin{align*}
\sum_{j}\hat{A}_{j}^{2} & =\sum_{r}\sum_{s}\sum_{j}u_{jr}u_{js}A_{r}A_{s}\\
 & =\sum_{r}A_{r}^{2}\\
 & =\sum_{j}A_{j}^{2}.
\end{align*}
Setting
\begin{equation*}
\tilde{\lambda}_{j}=\sum_{s}u_{js}\lambda_{s}.
\end{equation*}
we next compute
\begin{align*}
\sum_{j}\tilde{\lambda}_{j}\hat{A}_{j} & 
=\sum_{r}\sum_{s}\sum_{j}u_{js}u_{jr}\lambda_{s}A_{r}\\
 & =\sum_{r}\lambda_{r}A_{r}\\
 & =\sum_{j}\lambda_{j}A_{j}
\end{align*}
Finally
\begin{align*}
\sum_{j}\tilde{\lambda}_{j}^{2} & =\sum_{r}\sum_{s}\sum_{j}u_{rj}u_{sj}\lambda_{r}\lambda_{s}\\
 & =\sum_{r}\lambda_{r}^{2}.
\end{align*}
Thus
\begin{align*}
Q_{\widetilde{\boldsymbol{\lambda}}}\left(\hat{A}_{1},\dots,\hat{A}_{d}\right) & =\sum_{j}\hat{A}_{j}^{2}-2\sum_{j}\tilde{\lambda}_{j}\hat{A}_{j}+\sum_{j}\tilde{\lambda}_{j}^{2}I\\
 & =\sum_{j}A_{j}^{2}-2\sum_{j}\lambda_{j}A_{j}+\sum_{j}\lambda_{j}I\\
 & =Q_{\boldsymbol{\lambda}}\left(A_{1},\dots,A_{d}\right).
\end{align*}
\end{proof}

If $Q$ is a unitary and we let
$
\mathbb{B}=(QA_1Q^*,\dots,QA_1Q^*)
$
then $L_{\boldsymbol{\lambda}}(\mathbb{B})$ is unitarily equivalent to $L_{\boldsymbol{\lambda}}(\mathbb{A})$  and $Q_{\boldsymbol{\lambda}}(\mathbb{B})$ is unitarily equivalent to $Q_{\boldsymbol{\lambda}}(\mathbb{A})$.  Thus we can use Lemma~\ref{lem:Unitary_action_on_Clifford} and Lemma~\ref{lem:Unitary_action_on_quadratic}
 to obtain the following theorem.

\begin{thm}
\label{thm:symmetry_in_PS}
Suppose $\left(A_{1},\dots,A_{d}\right)$ are  Hermitian elements in the unital $C^*$-algebra $\mathcal{A}$ and  that $U\in O(d)$.
Let 
\begin{equation*}
\hat{A}_{j}=\sum_{s}u_{js}A_{s}.
\end{equation*}
If there exists a unitary $Q$ in $\mathcal{A}$ such that 
$Q\hat{A_{j}}Q^{*}=A_{j}$ for all $j$ then 
\begin{equation*}
\mu^\textup{C}_{U\boldsymbol{\lambda}}\left(A_{1},\dots,A_{d}\right)
= 
\mu^\textup{C}_{\boldsymbol{\lambda}}\left(A_{1},\dots,A_{d}\right)
\end{equation*}
and
\begin{equation*}
\mu^\textup{Q}_{U\boldsymbol{\lambda}}\left(A_{1},\dots,A_{d}\right)
= 
\mu^\textup{Q}_{\boldsymbol{\lambda}}\left(A_{1},\dots,A_{d}\right) .
\end{equation*}
\end{thm}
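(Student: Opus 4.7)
The plan is to factor the symmetry as a composition of two unitary equivalences, exactly as the remark preceding the statement suggests. Reading the hypothesis $Q\hat{A}_j Q^* = A_j$ as $\hat{A}_j = Q^* A_j Q$, I observe that the spectral localizer and the quadratic operator transform covariantly:
\begin{equation*}
L_{\boldsymbol{\mu}}(\hat{\mathbb{A}}) = (Q^{*}\otimes I)\,L_{\boldsymbol{\mu}}(\mathbb{A})\,(Q\otimes I), \qquad Q_{\boldsymbol{\mu}}(\hat{\mathbb{A}}) = Q^{*}\, Q_{\boldsymbol{\mu}}(\mathbb{A})\, Q,
\end{equation*}
for every $\boldsymbol{\mu}\in\mathbb{R}^{d}$. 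Since $s_{\min}$ is invariant under conjugation by a unitary (immediate from the definition via unit vectors), this yields $\mu^{\textup{C}}_{\boldsymbol{\mu}}(\hat{\mathbb{A}}) = \mu^{\textup{C}}_{\boldsymbol{\mu}}(\mathbb{A})$ and $\mu^{\textup{Q}}_{\boldsymbol{\mu}}(\hat{\mathbb{A}}) = \mu^{\textup{Q}}_{\boldsymbol{\mu}}(\mathbb{A})$ for every $\boldsymbol{\mu}$.

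Next I would invoke the orthogonal-action lemmas. Lemma~\ref{lem:Unitary_action_on_quadratic} already gives the operator identity $Q_{U\boldsymbol{\lambda}}(\hat{\mathbb{A}}) = Q_{\boldsymbol{\lambda}}(\mathbb{A})$, so taking $s_{\min}$ and a square root produces $\mu^{\textup{Q}}_{U\boldsymbol{\lambda}}(\hat{\mathbb{A}}) = \mu^{\textup{Q}}_{\boldsymbol{\lambda}}(\mathbb{A})$. For the Clifford side, Lemma~\ref{lem:Unitary_action_on_Clifford} as stated only addresses invertibility, but its proof (inherited from \cite{CerjanLoring2023EvenSpheres}) proceeds by exhibiting a unitary $V\in M_{2^{\lfloor d/2\rfloor}}(\mathbb{C})$ on the Clifford representation such that
\begin{equation*}
L_{U\boldsymbol{\lambda}}(\hat{\mathbb{A}}) = (I\otimes V^{*})\,L_{\boldsymbol{\lambda}}(\mathbb{A})\,(I\otimes V).
\end{equation*}
This unitary equivalence at once upgrades the conclusion to $\mu^{\textup{C}}_{U\boldsymbol{\lambda}}(\hat{\mathbb{A}}) = \mu^{\textup{C}}_{\boldsymbol{\lambda}}(\mathbb{A})$. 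Chaining with the previous paragraph,
\begin{equation*}
\mu^{\textup{C}}_{U\boldsymbol{\lambda}}(\mathbb{A}) = \mu^{\textup{C}}_{U\boldsymbol{\lambda}}(\hat{\mathbb{A}}) = \mu^{\textup{C}}_{\boldsymbol{\lambda}}(\mathbb{A}),
\end{equation*}
and symmetrically for $\mu^{\textup{Q}}$, completing both assertions.

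The only delicate piece of bookkeeping is that Lemma~\ref{lem:Unitary_action_on_Clifford} is phrased only at the level of the Clifford spectrum, whereas what is actually needed is the stronger statement that the two localizers are unitarily equivalent. One option is to cite the underlying construction directly; a cleaner option is to upgrade the lemma statement so that it records the unitary equivalence of localizers before chaining. Either route is routine, since the construction of $V$ uses only the Clifford relations and the orthogonality of $U$, and therefore transfers verbatim from matrix entries to entries in $\mathcal{A}$; everything else in the argument is just the unitary invariance of $s_{\min}$.
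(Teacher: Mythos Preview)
Your argument is correct and matches the paper's approach exactly: the paragraph preceding the theorem is the entire proof, and it does precisely what you do---combine the unitary equivalence coming from conjugation by $Q$ with Lemmas~\ref{lem:Unitary_action_on_Clifford} and~\ref{lem:Unitary_action_on_quadratic}. Your observation that Lemma~\ref{lem:Unitary_action_on_Clifford} is stated only at the level of invertibility, while the pseudospectrum claim requires the underlying unitary equivalence of localizers, is a genuine point the paper glosses over; your proposed fix (cite the construction of $V$ from \cite{CerjanLoring2023EvenSpheres}, or restate the lemma to record the unitary equivalence) is the right one.
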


\section{The $C^{*}$-algebra generated by two projections}

The universal unital $C^{*}$-algebra  for the relations that define
two orthogonal projections \cite{raeburn1989two_projections} is 
\begin{equation}
\label{eqn:universal_for_two_proj}
C^{*}(\mathbb{Z}_{2}*\mathbb{Z}_{2})=\left\{ f\in C([-1,1],\boldsymbol{M}_{2})\,\mid f(-1)\text{ and }f(1)\text{ are diagonal}\right\} .
\end{equation}
We will now compute both forms of pseusospectrum for this universal
pair of projections. The formulas derived will come out simpler if
we instead think in terms of a universal pair of unitary operators
of order two. We might see these as two incompatible dichotomous observables.

The generators of $C^{*}(\mathbb{Z}_{2}*\mathbb{Z}_{2})$ are $U$
and $V$ where
\begin{equation}
\label{eqn:the_two_projectinons}
U(z)=\left[\begin{array}{cc}
1 & 0\\
0 & -1
\end{array}\right],\ V(z)=\left[\begin{array}{cc}
z & \sqrt{1-z^{2}}\\
\sqrt{1-z^{2}} & -z
\end{array}\right].
\end{equation}
First we work on the representation corresponding to evaluation at
a fixed $z$. Here the Hilbert space is only two-dimensional and the
calculations are not too extensive. They are a bit tedious so we utilized
a computer algebra package here. 

\begin{figure}
\noindent \begin{centering}
\includegraphics[viewport=86bp 0bp 420bp 310bp,clip,scale=0.4]{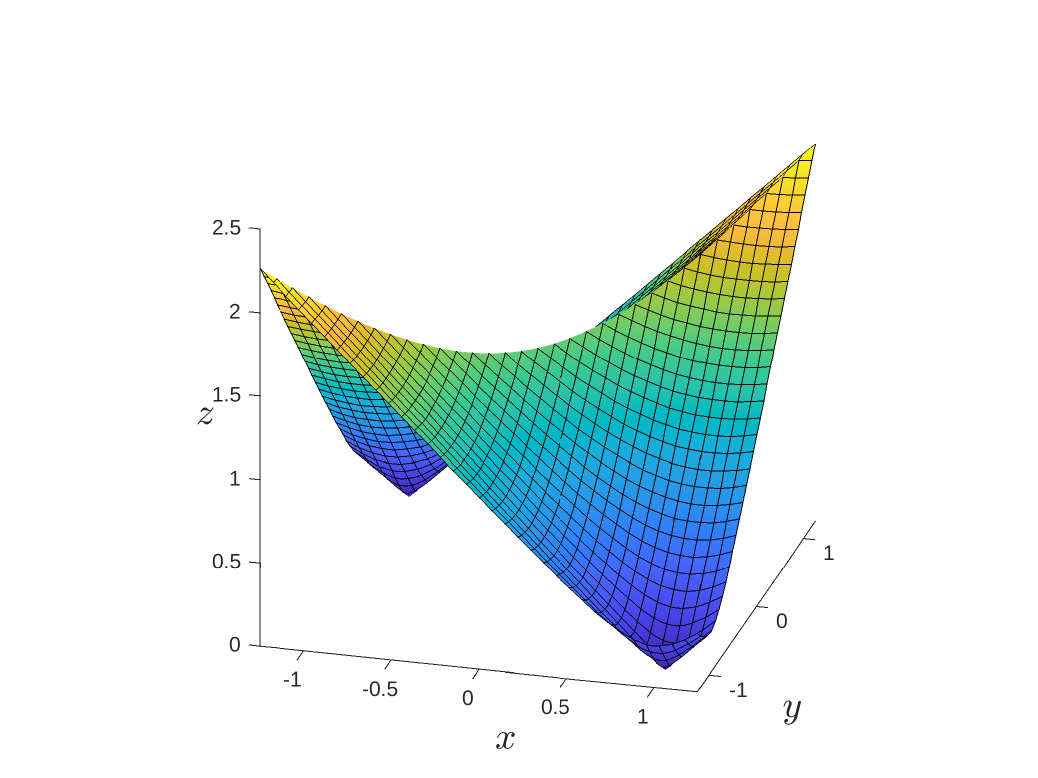}\includegraphics[viewport=86bp 0bp 420bp 310bp,clip,scale=0.4]{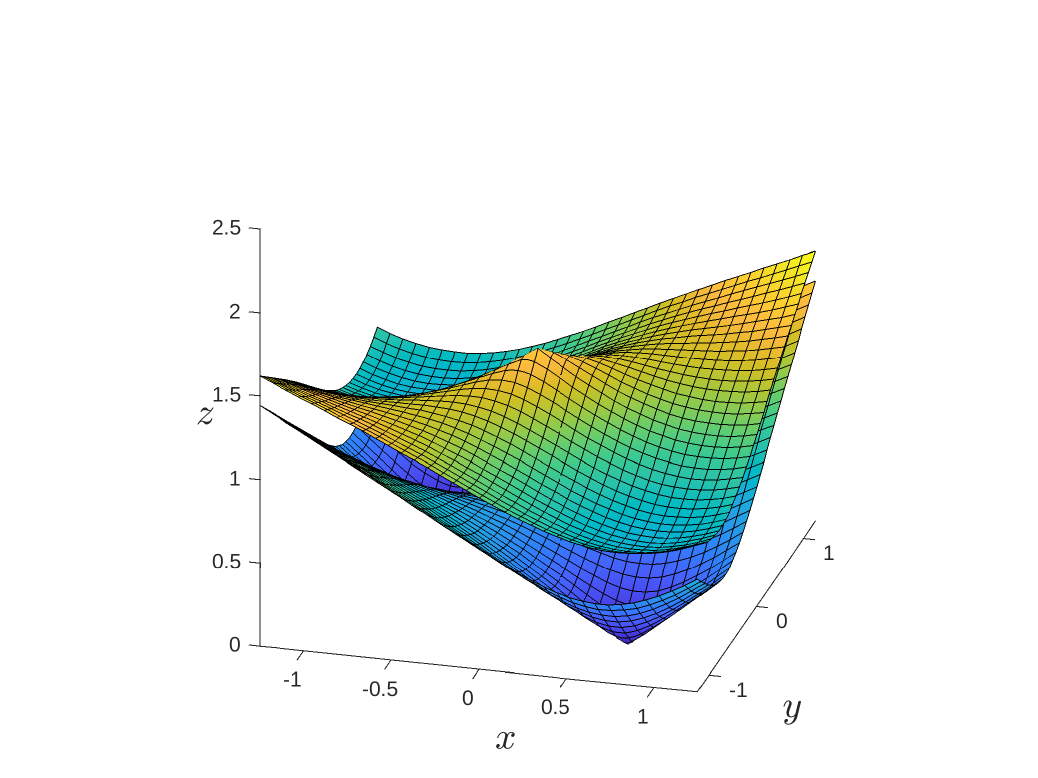}\includegraphics[viewport=86bp 0bp 420bp 310bp,clip,scale=0.4]{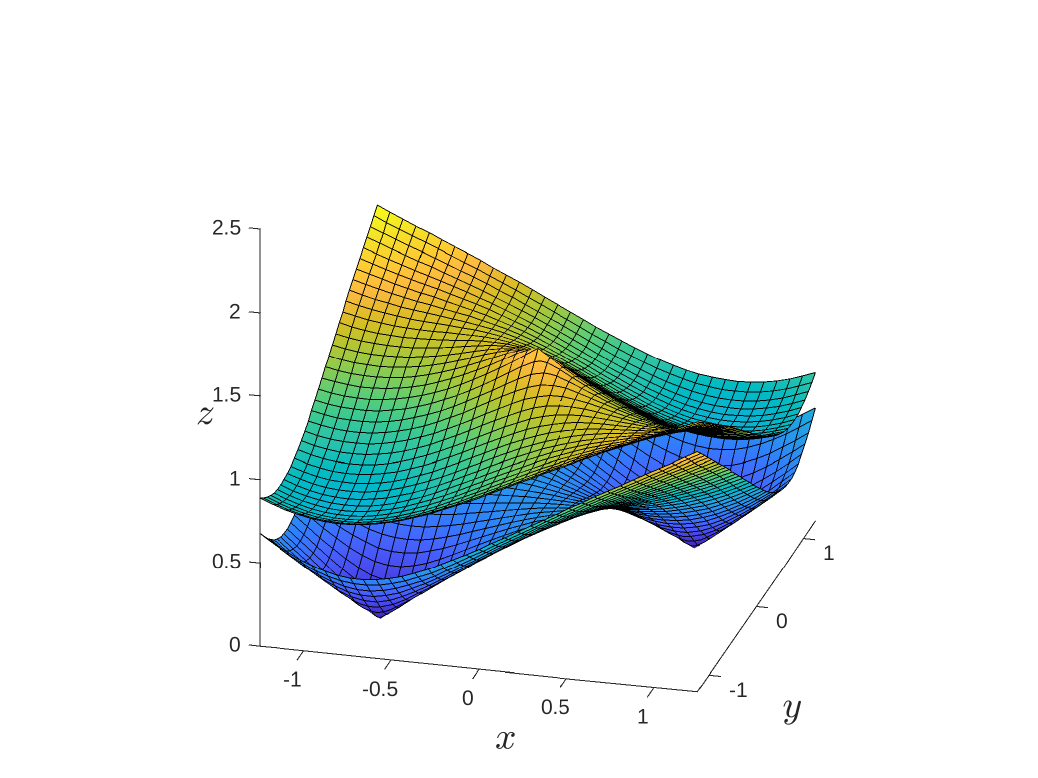}
\par\end{centering}
\noindent \begin{centering}
\includegraphics[viewport=86bp 0bp 420bp 310bp,clip,scale=0.4]{U_Vz_bothPS_z0_50}\includegraphics[viewport=86bp 0bp 420bp 310bp,clip,scale=0.4]{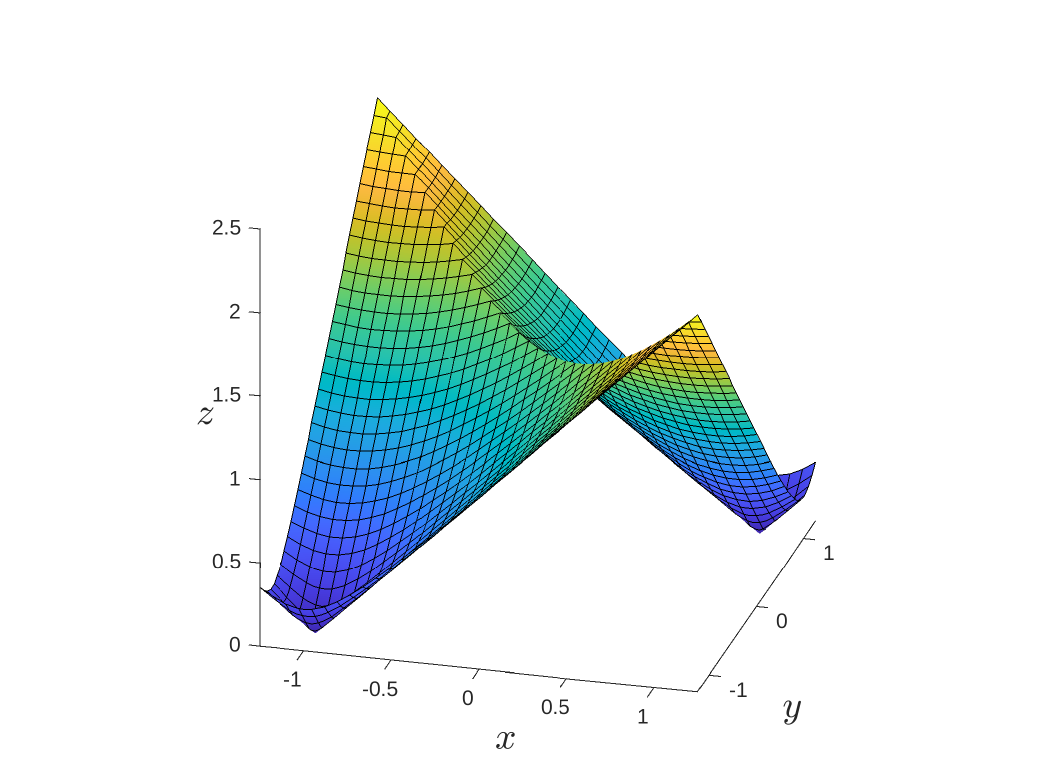}
\par\end{centering}
\caption{Plotted together the Clifford and quadratic  pseudospectrum for $(U,V_{z})$, as defined in (\ref{eqn:two-by-two-projections},) for various values of of $z$.  From the top-left to the bottom-right, the values used are $z=1$, $z=0.6$, $z=0$, $z=-0.5$, $z=-1$.}
\end{figure}

\begin{lem}
Suppose $-1\leq z\leq1$. If
\begin{equation}
\label{eqn:two-by-two-projections}
U=\left[\begin{array}{cc}
1 & 0\\
0 & -1
\end{array}\right],\ V_{z}=\left[\begin{array}{cc}
z & \sqrt{1-z^{2}}\\
\sqrt{1-z^{2}} & -z
\end{array}\right]
\end{equation}
then
\begin{equation*}
\mu_{(x,y)}^{\textup{Q}}(U,V_{z})=\sqrt{x^{2}+y^{2}+2-2\sqrt{x^{2}+2zxy+y^{2}}}
\end{equation*}
and
\begin{equation*}
\mu_{(x,y)}^{\textup{C}}(U,V_{z})=\sqrt{x^{2}+y^{2}+2-2\sqrt{x^{2}+2xyz+y^{2}+1-z^{2}}}.
\end{equation*}
\end{lem}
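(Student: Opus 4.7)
The plan is to reduce both pseudospectra to routine spectral calculations for $2\times 2$ matrices, exploiting the fact that $U^2 = V_z^2 = I$ and that the spectral localizer has a convenient off-diagonal block form.

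For the quadratic part, the first step is to expand
\begin{equation*}
   Q_{(x,y)}(U,V_{z}) = (U-xI)^{2} + (V_{z}-yI)^{2} = (2 + x^{2} + y^{2})I - 2(xU + yV_{z}),
\end{equation*}
using $U^{2} = V_{z}^{2} = I$. The matrix $H := xU + yV_{z}$ is Hermitian with zero trace, and computing $\det H$ directly (the $z^{2}$ terms from $(yz)^{2}$ and $-y^{2}(1-z^{2})$ cancel nicely) gives $-(x^{2} + 2xyz + y^{2})$, so the eigenvalues of $H$ are $\pm\sqrt{x^{2}+2xyz+y^{2}}$. Thus the smallest eigenvalue of $Q_{(x,y)}$ is $(2 + x^{2}+ y^{2}) - 2\sqrt{x^{2}+2xyz+y^{2}}$, and $\mu^{\textup{Q}}_{(x,y)} = \sqrt{s_{\min}(Q_{(x,y)})}$ delivers the claimed formula.

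For the Clifford part, the starting point is the block form already written in the paper: with $\Gamma_{1} = \sigma_{x}$, $\Gamma_{2} = \sigma_{y}$,
\begin{equation*}
   L_{(x,y)}(U,V_{z}) = \begin{bmatrix} 0 & N^{*} \\ N & 0 \end{bmatrix}, \qquad N = (U-xI) + i(V_{z}-yI).
\end{equation*}
Since $L$ is self-adjoint with eigenvalues $\pm s_{j}(N)$, Lemma~\ref{lem:s_min_for_normal} (applied to $L$) gives $\mu^{\textup{C}}_{(x,y)} = s_{\min}(L) = s_{\min}(N) = \sqrt{\lambda_{\min}(N^{*}N)}$. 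For the $2\times 2$ positive matrix $N^{*}N$ I would use
\begin{equation*}
   \lambda_{\min}(N^{*}N) = \tfrac{1}{2}\Bigl(\mathrm{tr}(N^{*}N) - \sqrt{\mathrm{tr}(N^{*}N)^{2} - 4|\det N|^{2}}\Bigr).
\end{equation*}
A direct Frobenius-norm computation gives $\mathrm{tr}(N^{*}N) = 2(2 + x^{2} + y^{2})$, while expanding the $2\times 2$ determinant yields $\det N = (x^{2}-y^{2}) + 2i(xy - z)$, so $|\det N|^{2} = (x^{2}+y^{2})^{2} - 8xyz + 4z^{2}$. Substituting these, the discriminant collapses to
\begin{equation*}
  \mathrm{tr}(N^{*}N)^{2} - 4|\det N|^{2} = 16\bigl(x^{2} + 2xyz + y^{2} + 1 - z^{2}\bigr),
\end{equation*}
and the announced formula falls out after taking the square root twice.

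The main obstacle is just the algebraic bookkeeping inside $\det N$: one has to carefully track real and imaginary parts of $(1-x+i(z-y))(-1-x+i(-z-y)) - (i\sqrt{1-z^{2}})^{2}$, and the crucial cancellation is that the $z^{2}$ contributions from the diagonal product and from the off-diagonal square combine to leave exactly the $1-z^{2}$ that distinguishes the Clifford formula from its quadratic counterpart. This extra $1-z^{2}$ inside the radical also makes it transparent why the two pseudospectra agree at $z = \pm 1$ (where $U$ and $V_{z}$ commute) and differ maximally at $z=0$, in line with the bound in Equation~\ref{eqn:closeness_of_PS}.
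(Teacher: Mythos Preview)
Your proof is correct and follows essentially the same approach as the paper: both compute the eigenvalues of $Q_{(x,y)}$ and the singular values of $N=(U-xI)+i(V_z-yI)$ directly for these $2\times 2$ matrices. Your use of $U^2=V_z^2=I$ to rewrite $Q_{(x,y)}=(2+x^2+y^2)I-2(xU+yV_z)$, and of the trace--determinant formula for $N^*N$, streamlines the algebra slightly compared with the paper's explicit matrix entries (which it attributes to a computer algebra package), but the underlying computation is the same.
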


\begin{proof}
We compute the smallest eigenvalue of 
\begin{equation*}
Q_{(x,y)}(U,V_{z})=\left(U-xI\right)^{2}+\left(V_{z}-yI\right)^{2},
\end{equation*}  
finding that this equals 
\begin{equation*}
Q_{(x,y)}=\left[\begin{array}{cc}
x^{2}+y^{2}-2x-2zy+2 & -2y\sqrt{1-z^{2}}\\
-2y\sqrt{1-z^{2}} & x^{2}+y^{2}+2x+2zy+2
\end{array}\right].
\end{equation*}  
This has eigenvalues
\begin{equation*}
x^{2}+y^{2}+2\pm2\sqrt{x^{2}+2zxy+y^{2}}.
\end{equation*}  
The smaller of the eigenvalues is with the minus sign. To get to $\mu_{(x,y)}^{Q}(P,Q_{z})$
we apply square root. 

For the Clifford pseudospectrum, we need the eigenvalues of
\begin{equation*}
\left[\begin{array}{cc}
0 & \left(U-xI\right)-i\left(V_{z}-yI\right)\\
\left(U-xI\right)+i\left(V_{z}-yI\right) & 0
\end{array}\right]
\end{equation*}  
which are determined by the singular values of 
\begin{equation*}
\left(U-xI\right)+i\left(V_{z}-yI\right).
\end{equation*}  
These are
\begin{equation*}
\sqrt{x^{2}+y^{2}+2\pm2\sqrt{x^{2}+2xyz+y^{2}+1-z^{2}}}.
\end{equation*}  
\end{proof}

\begin{figure}
\noindent \begin{centering}
\includegraphics[scale=0.6]{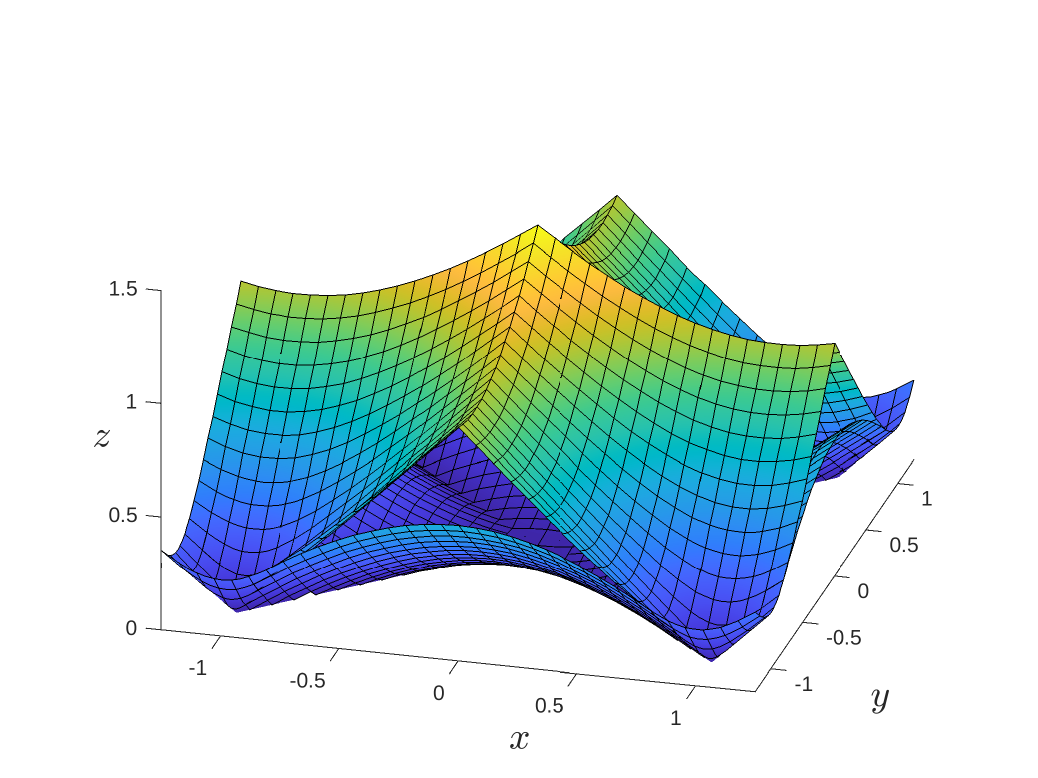}
\par\end{centering}
\caption{The Clifford and quadratic pseudospectrum for the universal pair of order-two unitary matrices $(U,V)$ are plotted together here.}
\end{figure}

\begin{thm}
For the universal pair of unitary operators of order two, so $U$ and $V$ as in \textup{(\ref{eqn:the_two_projectinons})} in the universal $C^*$-algebra \textup{(\ref{eqn:universal_for_two_proj})}, the quadratic pseudospectrum is 
\begin{equation*}
\mu_{(x,y)}^{\textup{Q}}(U,V)=\textup{dist}\left((x,y),\left\{ (-1,-1),(-1,1),(1,-1),(1,1)\strut\right\} \right)
\end{equation*}
and the Clifford pseudospectrum is
\begin{equation}
\mu_{(x,y)}^{\textup{C}}(U,V)=\sqrt{x^{2}+y^{2}+2-2\sqrt{x^{2}y^{2}+x^{2}+y^{2}+1}}\label{eq:CliffordPS_of_U_Vz}
\end{equation}
when \textup{$-1\leq xy\leq1$,} and otherwise
\begin{equation*}
\mu_{(x,y)}^{\textup{C}}(U,V)=\mu_{(x,y)}^{\textup{Q}}(U,V).
\end{equation*}
In particular, the quadratic spectrum is the set of the four points $(\pm1,\pm1)$ and the Clifford spetrum is a cross, the union of the line segment from $(-1,-1)$  to $(1,1)$ and the line segment from $(-1,1)$  to $(1,-1)$.
\end{thm}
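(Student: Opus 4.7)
The plan is to use spectral permanence together with the explicit description in (\ref{eqn:universal_for_two_proj}) to reduce both pseudospectra to single-variable optimizations over $z\in[-1,1]$, then plug in the closed-form expressions supplied by the preceding lemma. Since $C^{*}(\mathbb{Z}_{2}*\mathbb{Z}_{2})$ sits as a unital $C^{*}$-subalgebra of $C([-1,1],M_{2}(\mathbb{C}))$, spectral permanence lets me compute $s_{\min}$ of $Q_{(x,y)}(U,V)$ inside $C([-1,1],M_{2}(\mathbb{C}))$ and of $L_{(x,y)}(U,V)$ inside $M_{2}(C([-1,1],M_{2}))\cong C([-1,1],M_{4}(\mathbb{C}))$. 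For any self-adjoint continuous function $f:[-1,1]\to M_{n}(\mathbb{C})$, the spectrum is $\bigcup_{z}\sigma(f(z))$, so Lemma~\ref{lem:s_min_for_normal} gives
\begin{equation*}
s_{\min}(f)\;=\;\min_{z\in[-1,1]}s_{\min}(f(z)).
\end{equation*}
The values of $Q_{(x,y)}(U,V)$ and $L_{(x,y)}(U,V)$ at $z$ are exactly $Q_{(x,y)}(U,V_{z})$ and $L_{(x,y)}(U,V_{z})$, so
\begin{equation*}
\mu^{\textup{Q}}_{(x,y)}(U,V)=\min_{z}\mu^{\textup{Q}}_{(x,y)}(U,V_{z}),\qquad \mu^{\textup{C}}_{(x,y)}(U,V)=\min_{z}\mu^{\textup{C}}_{(x,y)}(U,V_{z}).
\end{equation*}

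For the quadratic formula, minimizing over $z$ amounts to maximizing the linear function $z\mapsto x^{2}+2zxy+y^{2}$ on $[-1,1]$. The maximum is attained at $z=\textup{sgn}(xy)\in\{-1,1\}$ with value $(|x|+|y|)^{2}$, so after substitution and simplification I get $\sqrt{(|x|-1)^{2}+(|y|-1)^{2}}=\textup{dist}((x,y),\{-1,1\}^{2})$. For the Clifford formula, the relevant function is the concave-down quadratic $g(z)=x^{2}+2xyz+y^{2}+1-z^{2}$ in $z$, with critical point $z^{*}=xy$. When $|xy|\leq 1$, the maximum on $[-1,1]$ is $g(xy)=x^{2}+y^{2}+x^{2}y^{2}+1$, yielding (\ref{eq:CliffordPS_of_U_Vz}). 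When $|xy|>1$, the maximum is at an endpoint; evaluating $g(\pm 1)=(x\pm y)^{2}$ and selecting the appropriate sign gives $(|x|+|y|)^{2}$, so in that region $\mu^{\textup{C}}_{(x,y)}(U,V)=\mu^{\textup{Q}}_{(x,y)}(U,V)$ as claimed.

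The spectra are then read off as zero sets. Clearly $\mu^{\textup{Q}}=0$ exactly at the four corners $(\pm 1,\pm 1)$. For the Clifford case inside the strip $|xy|\leq 1$, the equation $\mu^{\textup{C}}=0$ rearranges to $(x^{2}+y^{2}+2)^{2}=4(x^{2}y^{2}+x^{2}+y^{2}+1)$, whose difference is $(x^{2}-y^{2})^{2}$, so $y=\pm x$; intersecting with $|x|\leq 1$ produces exactly the two diagonal segments. Outside the strip, $\mu^{\textup{C}}=\mu^{\textup{Q}}$ vanishes only at the four corners, which are already the endpoints of the diagonals. The only nontrivial ingredient is the reduction $s_{\min}(f)=\min_{z}s_{\min}(f(z))$ via spectral permanence; once that is in place, the rest is a direct one-variable calculus exercise on $[-1,1]$.
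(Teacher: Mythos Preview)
Your proof is correct and follows essentially the same approach as the paper: reduce to a pointwise minimization over $z\in[-1,1]$ using the preceding lemma's closed-form expressions, then optimize the inner radical (linear in $z$ for the quadratic case, concave quadratic in $z$ for the Clifford case). You are in fact more thorough than the paper in two places: you justify the reduction $s_{\min}(f)=\min_{z}s_{\min}(f(z))$ via spectral permanence and Lemma~\ref{lem:s_min_for_normal}, and you explicitly treat the Clifford case $|xy|>1$ and verify the zero sets, whereas the paper's proof simply stops after deriving (\ref{eq:CliffordPS_of_U_Vz}) on the strip.
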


\begin{proof}
First the quadratic case. For each $(x,y)$ we need to minimize 
\begin{equation*}
\mu_{(x,y)}^{\textup{Q}}(U,V_{z})=\sqrt{x^{2}+y^{2}+2-2\sqrt{x^{2}+2zxy+y^{2}}}
\end{equation*}
over the range $-1\leq z\leq1$. This is constant when $x=0$ or $y=0$
and in the remaining cases, is maximized when $z=-1$ or when $z=1$.
Since
\begin{equation*}
x^{2}\pm2xy+y^{2}=(x\pm y)^{2}
\end{equation*}
we find that
\begin{equation*}
\mu_{(x,y)}^{\textup{Q}}(U,V_{1})=\sqrt{x^{2}+y^{2}+2-2\sqrt{x^{2}+2xy+y^{2}}}=\begin{cases}
\sqrt{\left(x+1\right)^{2}+\left(y+1\right)^{2}} & \text{if }x+y\leq0\\
\sqrt{\left(x-1\right)^{2}+\left(y-1\right)^{2}} & \text{if }x+y\geq0
\end{cases}.
\end{equation*}
This means $\mu_{(x,y)}^{\textup{Q}}(U,V_{1})$ is the distance of
$(x,y)$ to the set $\{(-1,-1),(1,1)\}$. Similarly $\mu_{(x,y)}^{\textup{Q}}(U,V_{-1})$
is the distance of $(x,y)$ to the set $\{(1,-1),(-1,1)\}$. 

Now we look to the Clifford case. For each $(x,y)$ we need to minimize
\begin{equation}
\mu_{(x,y)}^{\textup{C}}(U,V_{z})=\sqrt{x^{2}+y^{2}+2-2\sqrt{x^{2}+2xyz+y^{2}+1-z^{2}}}.\label{eq:CliffordPS_of_universal_unitaries}
\end{equation}
This will happen at the maximum of
\begin{equation*}
g(z)=x^{2}+2xyz+y^{2}+1-z^{2}.
\end{equation*}
This has derivative
\begin{equation*}
g'(z)=2xy-2z
\end{equation*}
so the maximum value of $g$ seems like it will be at $z=z_{0}$ where $z_{0}=xy$
but we need to ensure this is with $-1\leq z\leq1$. If $(x,y)$ is
in the region specified by $-1\leq xy\leq1$, 
a region that includes $[-1,1]\times[-1,1]$, then the value of $\mu_{(x,y)}^{\textup{C}}(U,V_{z})$
is found at $z_{0}$. Substituting $z_{0}$ for $z$ in Eq.~\ref{eq:CliffordPS_of_universal_unitaries}
yields Eq.~\ref{eq:CliffordPS_of_U_Vz}.

\end{proof}

\section{A hemisphere as Clifford spectrum \label{sec:hemisphere}}

Example~\ref{ex:any_space_occurs} tells us we can find essentially any
finite-dimensional compact space as the Clifford spectrum of some set of commuting Hermitian operators.  In the finite-dimensional case we need to introduce noncommutativity to get Clifford spectrum in dimension greater that zero.  Many papers have examined the spaces that one can get from finite matrices, including \cite{berenstein2012matrix_embeddings,CerjanLoring2023EvenSpheres,DeBonisLorSver_joint_spectrum,sykora2016fuzzy_space_kit}.  With three matrices it is possible to have as Clifford spectrum a $2$-manifold or a surface with cusps.  With four matrices one can get $2$-manifolds and $3$-manifold, for example.
The simplest example leading to a closed surface is the Pauli matrices themselves \cite{Kisil_monogenic_func_calc}, where 
\begin{equation*}
    \Lambda(\sigma_x,\sigma_y,\sigma_z) = S^2.
\end{equation*}
We find some odd behavior when we modify this example to be infinite-dimensional.

Consider
\begin{equation}
\label{eqn:hemisphere_example}
A_{1}=\frac{1}{2}\left[\begin{array}{ccccc}
0 & 1\\
1 & 0 & 1\\
 & 1 & 0 & 1\\
 &  & 1 & \ddots & \ddots\\
 &  &  & \ddots & \ddots
\end{array}\right],\ A_{2}=\frac{1}{2}\left[\begin{array}{ccccc}
0 & -i\\
i & 0 & -i\\
 & i & 0 & -i\\
 &  & i & \ddots & \ddots\\
 &  &  & \ddots & \ddots
\end{array}\right],\ A_{3}=\left[\begin{array}{ccccc}
b\\
 & 0\\
 &  & 0\\
 &  &  & \ddots\\
 &  &  &  & \ddots
\end{array}\right].
\end{equation}
If we set $b=1/2$ we have operators that are somewhat like 
$\tfrac{1}{2}\sigma_x,\tfrac{1}{2}\sigma_y,\tfrac{1}{2}\sigma_z$.  We will find that for this class of examples, we get a hemisphere and not a sphere.  This is in some way a manifestation that the $K$-theory of $\mathcal{B}(\mathcal{H})$ is trivial so there can be no local index stabilizing the Clifford spectrum.

Notice that $A_1$, $A_2$, $A_3$ are compact perturbations of 
\begin{equation*}
    K_1=\frac{1}{2}(S^*+S), K_2=\frac{i}{2}(S^*-S), K_3=0
\end{equation*} 
where $S$ is the usual shift operator.  Thus the essential Clifford spectrum in this example will not depend on $b$. 

\begin{lem}
For any  $b\geq 0$, if $H_{1}$, $H_{2}$, $H_{3}$ are defined
according to Equation~\textup{\ref{eqn:hemisphere_example}}, then
\begin{equation*}
\Lambda^{\textup{e}}(H_{1},H_{2},H_{3})=\mathbb{T}\times\{0\}.
\end{equation*}
\end{lem}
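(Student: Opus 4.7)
The plan is to pass to the Calkin algebra, where $H_3$ vanishes and $H_1,H_2$ coincide with the cleaner shift-operator pair, and then combine Theorem~\ref{thm:last_is_scalar} with the two-variable reduction of the Clifford spectrum to an ordinary spectrum.

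First, by Atkinson's theorem together with the fact that the Calkin projection $\pi\colon \mathcal{B}(\mathcal{H})\to\mathcal{B}(\mathcal{H})/\mathcal{K}(\mathcal{H})$ is a unital $*$-homomorphism, the operator $L_{\boldsymbol{\lambda}}(H_1,H_2,H_3)$ is Fredholm if and only if $L_{\boldsymbol{\lambda}}(\pi(H_1),\pi(H_2),\pi(H_3))$ is invertible in $M_2(\mathcal{B}(\mathcal{H})/\mathcal{K}(\mathcal{H}))$. Consequently
\begin{equation*}
\Lambda^{\textup{e}}(H_1,H_2,H_3)=\Lambda\bigl(\pi(H_1),\pi(H_2),\pi(H_3)\bigr),
\end{equation*}
the right-hand side being computed inside the Calkin algebra.

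Next, $H_3$ has rank one (its only nonzero entry is $b$ in the $(1,1)$-slot), so $\pi(H_3)=0$; and $H_j-K_j$ is finite rank for $j=1,2$, so $\pi(H_j)=\pi(K_j)$. Applying Theorem~\ref{thm:last_is_scalar} inside the Calkin algebra with third entry the scalar $0$ gives
\begin{equation*}
\Lambda\bigl(\pi(K_1),\pi(K_2),0\bigr)=\Lambda\bigl(\pi(K_1),\pi(K_2)\bigr)\times\{0\}.
\end{equation*}

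Finally, by the two-variable formula discussed at the start of Section~3 (where the spectral localizer becomes off-diagonal, so its invertibility reduces to that of $\pi(K_1)+i\pi(K_2)-(x+iy)I$), one has $(x,y)\in\Lambda(\pi(K_1),\pi(K_2))$ exactly when $x+iy$ lies in the spectrum of $\pi(K_1+iK_2)$ in the Calkin algebra. A direct computation shows that $K_1+iK_2$ is the unilateral shift $S$, whose Calkin image is unitary (because $I-SS^*$ is rank one) with spectrum equal to the full unit circle $\mathbb{T}$. Combining the three steps yields $\Lambda^{\textup{e}}(H_1,H_2,H_3)=\mathbb{T}\times\{0\}$. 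The only step requiring any brief calculation is the identification of $K_1+iK_2$ with a one-sided shift and the observation that its image under $\pi$ is unitary with full circular spectrum; every other step is a direct invocation of results already established in the excerpt.
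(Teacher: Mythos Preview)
Your proof is correct and follows essentially the same route as the paper's: both pass to the Calkin algebra (the paper does this implicitly), use Theorem~\ref{thm:last_is_scalar} with $\alpha=0$ since $\pi(H_3)=0$, and then invoke the two-variable reduction together with the fact that the essential spectrum of the unilateral shift is $\mathbb{T}$. Your version simply spells out the Calkin-algebra step and the identification $K_1+iK_2=S$ more explicitly than the paper does.
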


\begin{proof}
Since the essential spectrum of the unilateral shift is the unit circle we know that
\begin{equation*}
\Lambda^{\textup{e}}(H_{1},H_{2})=\mathbb{T}.
\end{equation*}
Now Theorem~\ref{thm:last_is_scalar} implies 
\begin{equation*}
\Lambda^{\textup{e}}(H_{1},H_{2},H_{3})=\Lambda^{\textup{e}}(H_{1},H_{2})\times\{0\}
\end{equation*}
so we are done.
\end{proof}

\begin{example}
\label{ex:Three_Toeplitz_type}
For any  $0 \leq b\leq 2.25$, if $H_{1}$, $H_{2}$, $H_{3}$ are defined
according to Equation~\ref{eqn:hemisphere_example}, then
\begin{equation*}
\Lambda(H_{1},H_{2},H_{3}).
\end{equation*}
equals the set of points $(x,y,z)$ that satisfy the equation
\begin{equation*}
    b^{4}z+b^{3}r^{2}-3b^{3}z^{2}-b^{3}-b^{2}r^{2}z+3b^{2}z^{3}+2b^{2}z+br^{4}-br^{2}-bz^{4}-bz^{2}+r^{2}z=0.
\end{equation*}
where $r^2= x^2 + y^2$
and subject to the constraints $0\leq z\leq b$ and $x^2 + y^2 \leq 1$.
\end{example}

We will verify this with a rigorous proof in the special case $b=1$.
When $b=0$ this follows easily from Theorem~\ref{thm:last_is_scalar}.  For the general case, we use an experimental mathematics approach, where we use computer algebra to derive an equality and an inequality that together determine part of the Clifford spectrum and then estimate the joint solution numerically.  

For any $b\geq 0$
 it is easy to see that $e^{i\theta}S$ is unitarily equivalent to $S$.  This implies that\begin{equation*}
   \left(\cos(\theta)H_1+\sin(\theta)H_2,\ 
   \cos(\theta)H_1-\sin(\theta)H_2,\ 
   H_3 \right)
\end{equation*}     
is unitarily equivalent to $H_1,H_2,H_3$.  We can now apply
Theorem~\ref{thm:symmetry_in_spectrum} to conclude that 
$\Lambda(H_{1},H_{2},H_{3})$ has rotational symmetry in the $x$-$y$-plane.  Thus, we can restrict our attention to the localizer with $\boldsymbol{\lambda}=(x,0,z)$ for $x\geq 0$.

The localizer naturally lives in $\mathbf{M}_2(\mathcal{B}(\mathcal{H}))$ but we can identify that with $\mathcal{B}(\mathcal{H})$ via the usual shuffling of basis elements.  Making this identification we can compute $L_{(-x,0,z)}$ as follows. Let $E_{ij}$ denote the usual two-by-two matrix units, so that $\sigma_y = -iE_{12}+iE_{21}$ etc. Since
\begin{align*}
A_{1}\otimes\sigma_{x}+A_{2}\otimes\sigma_{y} & =A_{1}\otimes(E_{12}+E_{21})+iA_{2}\otimes(-E_{12}+E_{21})\\
 & =(A_{1}-iA_{2})\otimes E_{12}+(A_{1}+iA_{2})\otimes E_{21}\\
 & =S^{*}\otimes E_{12}+S\otimes E_{21}
\end{align*}
we find
\begin{equation*}
A_{1}\otimes\sigma_{x}+A_{2}\otimes\sigma_{y}=\left[\begin{array}{ccccccc}
0 & 0 &  &  &  &  &  \\
0 & 0 & 1 &  &  &  &  \\
 & 1 & 0 & 0 &  &  &  \\
 &  & 0 & 0 & 1 &  &  \\
 &  &  & 1 & 0 & 0 &  \\
 &  &  &  & 0 & 0 & \ddots\\
  &   &   &   &   & \ddots & \ddots
\end{array}\right] .  
\end{equation*}
Therefore
\begin{equation*}
L_{(x,0,z)}  =\left(A_{1}-xI\right)\otimes\sigma_{x}+A_{2}\otimes\sigma_{y}+\left(A_{3}-zI\right)\otimes\sigma_{z}
\end{equation*}
works out as
\begin{equation*}
L_{(x,0,z)} = 
\left[\begin{array}{ccccccc}
b-z & -x &   &   &   &   &  \\
-x & z-b & 1 &  &  &  &  \\
 & 1 & -z & -x &  &  & \\
 &  & -x & z & 1 &  & \\
 &  &  & 1 & -z & -x &  \\
  &   & &  & -x & z & \ddots\\
  &   &  &  &   & \ddots & \ddots
\end{array}\right].   
\end{equation*}

We we already worked out the essential Clifford spectrum so we already know that $0$ is in the essential spectrum of the spectral localizer.  What is left is find and discrete spectrum.  Thus, we search for values of $x$ and $z$ that  lead to null vectors for $L_{(-x,0,z)}$.

\begin{figure}
\includegraphics[scale=0.75]{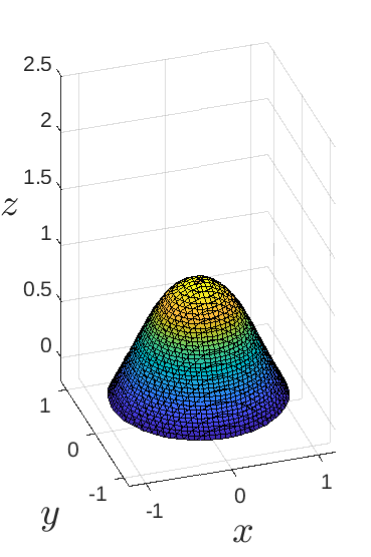}\hspace*{-.1cm}
\includegraphics[scale=0.75]{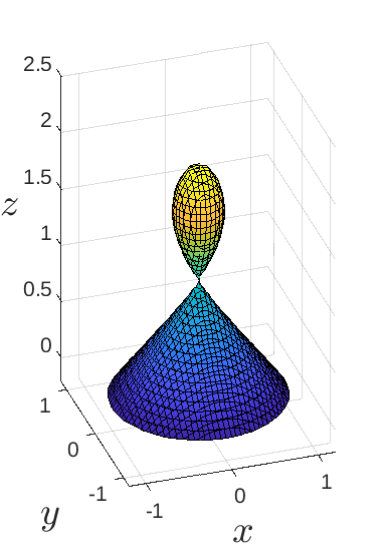}\hspace*{-0.1cm}
\includegraphics[scale=0.75]{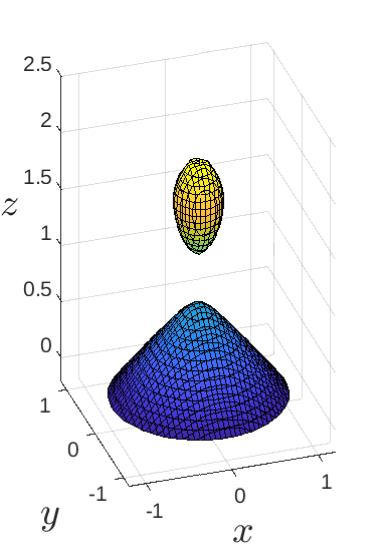}
\caption{The Clifford spectrum for the three operators in Equation~\ref{eqn:hemisphere_example}.  $b=1.00$ (left);  $b=2.00$ (center);  $b=2.05$ (right); }
\end{figure}

The calculations in the general case are complicated so now it is time to restrict to the case of $b=1$.

We need to discuss several special cases. 
First we deal with $x=0$. This makes $L_{(x,0,z)}$ block diagonal,
with blocks
\begin{equation*}
\left[1-z\right],\ \left[\begin{array}{cc}
z-1 & 1\\
1 & -z
\end{array}\right],\ \left[\begin{array}{cc}
z & 1\\
1 & -z
\end{array}\right]
\end{equation*}
that have determinants
$1-z$ $-z^{2}+z-1$ and $-z^{2}-1$,
so only $z=1$ leads to $L_{(x,0,z)}$ being singular. 

Now assume $x\geq 0$.
Let us assume that there exists a nonzero vector $\boldsymbol{a}=(a_{1},a_{2},\dots)\in l^{2}(\mathbb{N})$
so that $L_{(x,0,z)}\boldsymbol{a}=0$. We can rescale $\boldsymbol{a}$
so that $a_{1}=1$. This means
\begin{align*}
(1-z)-xa_{2} & =0,\\
-x+(z-1)a_{2}+a_{3} & =0,
\end{align*}
 and for $n\ge2$, 
\begin{align*}
a_{2n-2}-za_{2n-1}-xa_{2n} & =0\\
-xa_{2n-1}+za_{2n}+a_{2n+1} & =0.
\end{align*}
Thus, for $n\geq2$,
\[
a_{2n}=-\frac{z}{x}a_{2n-1}+\frac{1}{x}a_{2n-2}
\]
and
\begin{align*}
a_{2n+1} & =-za_{2n}+xa_{2n-1}\\
 & =-z\left(-\frac{z}{x}a_{2n-1}+\frac{1}{x}a_{2n-2}\right)+xa_{2n-1}\\
 & =\left(\frac{z^{2}}{x}+x\right)a_{2n-1}-\frac{z}{x}a_{2n-2}.
\end{align*}
If we set 
\begin{equation*}
M=\left[\begin{array}{cc}
\frac{1}{x} & -\frac{z}{x}\\
-\frac{z}{x} & \frac{x^{2}+z^{2}}{x}
\end{array}\right]
\end{equation*}
the relation on $\boldsymbol{a}$ is
\begin{equation*}
\left[\begin{array}{c}
a_{2n}\\
a_{2n+1}
\end{array}\right]=M\left[\begin{array}{c}
a_{2n-2}\\
a_{2n-1}
\end{array}\right] .
\end{equation*}
We also know
\begin{equation*}
a_{2}=\frac{1-z}{x}
\end{equation*}
and 
\begin{align*}
a_{3} & =x+(1-z)a_{2}\\
 & =x+(1-z)\frac{1-z}{x}
\end{align*}
so have an initial condition
\begin{equation*}
\boldsymbol{v}_{0}=\left[\begin{array}{c}
a_{2}\\
a_{3}
\end{array}\right]=\left[\begin{array}{c}
\frac{1-z}{x}\\
x+\frac{\left(1-z\right)^{2}}{x}
\end{array}\right].
\end{equation*}

Now let's pause to take care of the case of $z=1$ and $x>0$. In
that situation, we have
\begin{equation*}
M=\left[\begin{array}{cc}
\frac{1}{x} & -\frac{1}{x}\\
-\frac{1}{x} & \frac{x^{2}+1}{x}
\end{array}\right]
\end{equation*}
so that 
\begin{equation*}
\boldsymbol{v}_{0}=\left[\begin{array}{c}
0\\
x
\end{array}\right]
\end{equation*}
is not going to be an eigenvalue. Thus, the only Clifford spectrum
on the line $z=1$ is the point $(x,0,z)=(0,0,1)$ that we found above.

We need to know about the eigenvalues of the real symmetric matrix
$M$. a see that
\begin{equation*}
\textup{Tr}(M)=\frac{x^{2}+z^{2}+1}{x}
\end{equation*}
and $\det(M)=1$ we find that the eigenvalues of the real symmetric
matrix $M$ are
\begin{equation*}
\frac{1}{2}\left(\textup{Tr}(M)\pm\sqrt{\left(\textup{Tr}(M)\right)^{2}-4}\right).
\end{equation*}
We get a double real eigenvalue when 
\begin{equation*}
\frac{x^{2}+z^{2}+1}{x}=2
\end{equation*}
but this leads to $x=1$ and $z=0$ which we have excluded. We get
two complex eigenvalues when 
\begin{equation*}
\frac{x^{2}+z^{2}+1}{x}<2
\end{equation*}
which is equivalent to 
\begin{equation*}
\left(x-1\right)^{2}+z^{2}<0
\end{equation*}
and that cannot happen. What we need look at then is where $M$ has
two positive eigenvalues, one outside of $(0,1)$ and one inside.
The only way to have $\boldsymbol{a}$ square-summable is if the initial
vector lands in the eigenspace for the eigenvalue closer to $0$.

Let us find out when $M\boldsymbol{v}_{0}$ is parallel to $\boldsymbol{v}_{0}$.
Since
\begin{equation*}
M\boldsymbol{v}_{0} =
\frac{1}{x^{2}}\left[\begin{array}{c}
\left(1-z\right)-z\left(x^{2}+z^{2}-2z+1\right)\\
-z\left(1-z\right)+\left(x^{2}+z^{2}\right)\left(x^{2}+z^{2}-2z+1\right)
\end{array}\right]
\end{equation*}
we need to solve
\begin{equation*}
\frac{x^{-2}\left(\left(1-z\right)-z\left(x^{2}+z^{2}-2z+1\right)\right)}{x^{-1}\left(1-z\right)}-\frac{x^{-2}\left(-z\left(1-z\right)+\left(x^{2}+z^{2}\right)\left(x^{2}+z^{2}-2z+1\right)\right)}{x^{-1}\left(x^{2}+z^{2}-2z+1\right)}=0.
\end{equation*}
For $x\neq0$ and $z\neq1$ this is equivalent to 
\begin{equation*}
-\frac{x^{4}-z^{4}+3z^{3}-4z^{2}+3z-1}{x^{2}+z^{2}-2z+1}=0
\end{equation*}
so we get the curve 
$e(x,z)=0$  where
\begin{equation*}
e(x,z)=x^{4}-z^{4}+3z^{3}-4z^{2}+3z-1.
\end{equation*}

\begin{figure}
\includegraphics[scale=0.75]{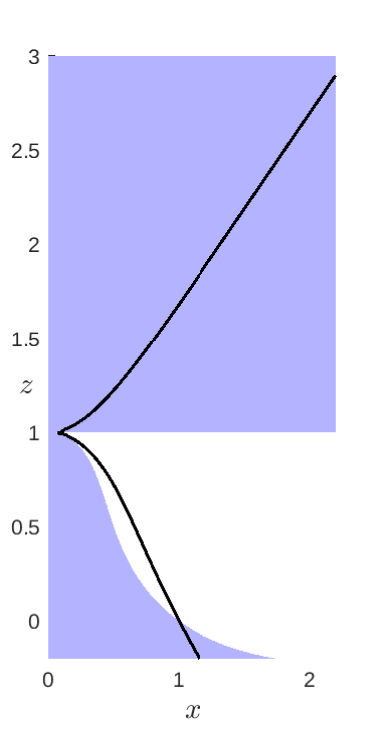}\hspace*{-.1cm}
\includegraphics[scale=0.75]{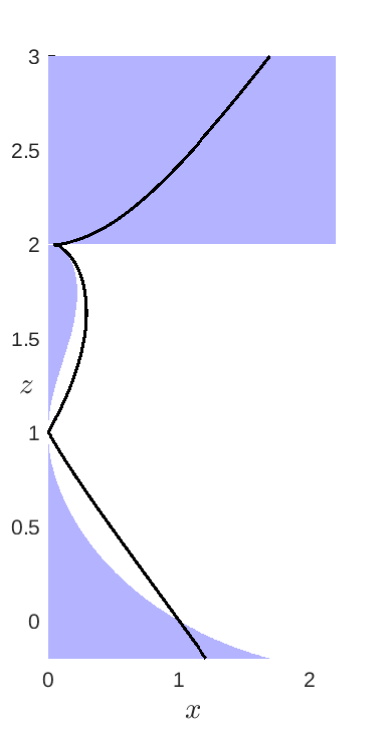}\hspace*{-.1cm}
\includegraphics[scale=0.75]{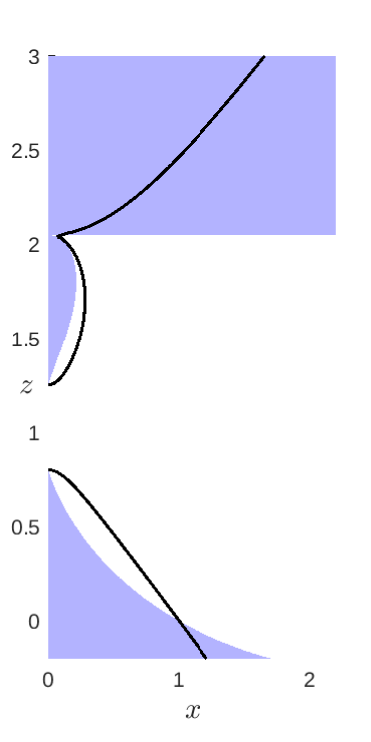}
\caption{Plots of $e(x,z)=0$ (dotted curves) and  $f(x,z)\geq0 $ (light blue area, switched to $\leq$ above $z=b$) for $b=1.00$, $b=2.00$, $b=2.05$.  \label{fig:b_one_plots}}
\end{figure}

What about the associated eigenvalue $\alpha$? Is it between $0$
and $1$? First, we deal with the case where $z<1$. Here the first
coordinate of the initial vector $\boldsymbol{v}_{0}$ will be positive
so  $\alpha$ will be positive when 
\begin{equation*}
\frac{1-z}{x}>\frac{-z\left(1-z\right)+\left(x^{2}+z^{2}\right)\left(x^{2}+z^{2}-2z+1\right)}{x^{2}}.
\end{equation*}
As $x^{2}$ is positive this is equivalent to
\begin{equation*}
x-xz>-x^{2}z-z^{3}+2z^{2}-2z+1
\end{equation*}
so we need to know when
\begin{equation*}
f(x,z)=x^{2}z-xz+x+z^{3}-2z^{2}+2z-1
\end{equation*}
is positive. For $z>1$ we need to know when $f(x,z)$ is negative.

We find
\begin{equation*}
\frac{\partial f}{\partial z}=x^{2}-x+3z^{2}-4z+2=(x-\tfrac{1}{2})^{2}+3(z-\tfrac{2}{3})^{2}+\tfrac{5}{12}
\end{equation*}
is always positive. The plots of the zero-locus of $e$ and $f$ are
shown in Figure~\ref{fig:b_one_plots}. These seem to cross at $(x,z)=(0,1)$ and $(x,z)=(1,0)$
and the curve for $f=0$ is below the curve for $e=0$ between those
two crossings. If we believe the computer-generated plots then we can conclude that the discrete Clifford spectrum is
on the curve $e(x,z)=0$ restricted to the region $0\leq x,z\leq1$. 

We can verify the above claim rigorously, with some uninteresting calculus.  We put the rest of the proof in Appendix~\ref{sec:Finish_Example_b_1}. 

When $b\neq 1$ the polynomials $e(x,z)$  and $f(x,z)$  are trickier.  As such, we resorted to using symbolic computer algebra for various values of $b$ and numerically plot the curve $e(x,z)=0$ and regions associated to $f(x,z)$. While we do not have a fully rigorous proof, we feel this is sufficient to understand this example.

\section{Momentum and Position -- an unbounded example}

We hope to push the idea of joint pseudospectrum into the realm of unbounded operators. Here we
consider a classic example, and do not attempt a general theory.   The example is a classic interpretation of position and momentum.

We will work on the Hilbert space $L^{2}(\mathbb{R})$, and in particular
on the subspace $\mathcal{S}$ of Schwartz functions, where we conisder only $f$ for which  $x^{m}D^{n}f(x)$
is always bounded.  Here we use
\begin{equation*}
D(f)(x)=\frac{d}{dx}(f(x))
\end{equation*}
to denote the standard differential operator. 
We consider two unbounded operators, $P,Q:\mathcal{S}\rightarrow\mathcal{S}$
where $P=-i D$ and $Q(f)(x)=xf(x).$
These are both symmetric (but probably not self-adjoint, so $Q\subseteq Q^{*}$
is all that is claimed, due to the small domain).

We can consider
\begin{equation*}
Q_{\boldsymbol{\lambda}}(P,Q)
=\left( P-\lambda_{1}I\right)^2
+\left( Q-\lambda_{2}I\right)^2
:\mathcal{S}\rightarrow\mathcal{S}.
\end{equation*}
and define
\begin{equation*}
\mu_{\boldsymbol{\lambda}}^{\textup{Q}}(P,Q)=
\sqrt{\min_{\|f\|=1}\|Q_{\boldsymbol{\lambda}}(P,Q)(f)\|}.
\end{equation*}
The Clifford pseudospectrum is also easy to define,
as 
\begin{equation*}
\mu_{\boldsymbol{\lambda}}^{\textup{C}}(P,Q)=\min_{\|f\|=1}\|L_{\boldsymbol{\lambda}}(P,Q)(f)\|
\end{equation*}
where
\begin{equation*}
L_{\boldsymbol{\lambda}}(P,Q)=\left[\begin{array}{cc}
0 & \left(P-\lambda_{1}I\right)-i\left(Q-\lambda_{2}I\right)\\
\left(P-\lambda_{1}I\right)+i\left(Q-\lambda_{2}I\right) & 0
\end{array}\right]
\end{equation*}
is taken to be an operator on ${S}\oplus\mathcal{S}$.
In \cite{DeBonisLorSver_joint_spectrum}, this example of Clifford pseudospectrum was computed, with
\begin{equation*}
\mu_{\boldsymbol{\lambda}}^\textup{C}(P,Q)\equiv 0
\end{equation*}
being the result. 

We will need some alternate characterizations of the quadratic pseudospectrum of $P$ and $Q$, as proven in \cite{cerjan_L_V_2022quadraticPS} for the matrix case.
We can consider
\begin{equation*}
M_{\boldsymbol{\lambda}}(P,Q)=\left[\begin{array}{c}
P-\lambda_{1}I\\
Q-\lambda_{2}I
\end{array}\right]:\mathcal{S}\rightarrow\mathcal{S}\oplus\mathcal{S}.
\end{equation*}
and can check that
\begin{align*}
\mu_{\boldsymbol{\lambda}}^{Q}(P,Q) & =\sqrt{\min_{\|f\|=1}\left\Vert \left(M_{\boldsymbol{\lambda}}(P,Q)(f)\right)^{*}\left(M_{\boldsymbol{\lambda}}(P,Q)(f)\right)\right\Vert }\\
 & =\min_{\|f\|=1}\|M_{\boldsymbol{\lambda}}(P,Q)(f)\|\\
 & =\min_{\|f\|=1}\sqrt{\left\Vert Pf-\lambda_{1}f\right\Vert ^{2}+\left\Vert Qf-\lambda_{2}f\right\Vert ^{2}}.
\end{align*}

Since 
\begin{equation*}
\left\Vert A\boldsymbol{v}-\lambda\boldsymbol{v}\right\Vert ^{2}=\Delta_{\boldsymbol{v}}^{2}A+\left(\textup{E}_{\boldsymbol{v}}(A)-\lambda\right)^{2}
\end{equation*}
for any Hermitian operator, we can find lower bounds on the quadratic
pseudospectrum by utilizing results on lower bounds on the sum of
uncertainty. For example, there are lower bounds on $\Delta_{\boldsymbol{v}}^{2}A+\Delta_{\boldsymbol{v}}^{2}B$
in \cite{maccone_Pati2014StrongerUncertainty}. For simplicity, we give a simplification of a proof from
\cite{maccone_Pati2014StrongerUncertainty} and directly prove the following.

\begin{lem}
\label{lem:Simple_lower_bound_for_QPS} If $\boldsymbol{v}$ is a unit
vector and $A$ and $B$ are Hermitian operators then 
\begin{equation*}
\left\Vert A\boldsymbol{v}\right\Vert ^{2}+\left\Vert B\boldsymbol{v}\right\Vert ^{2}\geq\left|\left\langle i\left[A,B\right]\boldsymbol{v},\boldsymbol{v}\right\rangle \right|.
\end{equation*}
\end{lem}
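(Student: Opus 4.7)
The plan is to derive the bound from the non-negativity of $\|Av \pm iBv\|^2$, which will sandwich the Hermitian quantity $\langle i[A,B]v,v\rangle$ between $\pm\bigl(\|Av\|^2 + \|Bv\|^2\bigr)$. This is a polarization-style identity argument rather than a Cauchy--Schwarz / AM--GM chain, and it keeps the sign information very clean.

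First I would record that $i[A,B]$ is formally Hermitian: since $A^*=A$ and $B^*=B$ we have $[A,B]^* = [B^*,A^*] = -[A,B]$, so $(i[A,B])^* = i[A,B]$. In particular $\langle i[A,B]v,v\rangle$ is real, so the absolute value appearing in the statement is meaningful. In the application of interest the operators may be unbounded, but they map the common invariant domain $\mathcal{S}$ into itself and $v$ lies in $\mathcal{S}$, so every inner product and product below is automatically defined.

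The heart of the argument is a direct expansion. Using $A^*=A$ and $B^*=B$ to move operators across the inner product,
\begin{align*}
\|Av \pm iBv\|^2
 &= \|Av\|^2 + \|Bv\|^2 \pm i\langle Bv,Av\rangle \mp i\langle Av,Bv\rangle \\
 &= \|Av\|^2 + \|Bv\|^2 \pm i\langle (AB-BA)v,v\rangle \\
 &= \|Av\|^2 + \|Bv\|^2 \pm \langle i[A,B]v,v\rangle.
\end{align*}
Since $\|Av+iBv\|^2 \geq 0$ and $\|Av-iBv\|^2 \geq 0$, the two signed identities together give
\begin{equation*}
-\bigl(\|Av\|^2 + \|Bv\|^2\bigr) \;\leq\; \langle i[A,B]v,v\rangle \;\leq\; \|Av\|^2 + \|Bv\|^2,
\end{equation*}
which is precisely the claim $\bigl|\langle i[A,B]v,v\rangle\bigr| \leq \|Av\|^2 + \|Bv\|^2$.

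There is essentially no obstacle: the lemma is a two-line consequence of polarization and positivity. An alternative derivation would first observe $\langle i[A,B]v,v\rangle = -2\,\operatorname{Im}\langle Bv,Av\rangle$, then combine Cauchy--Schwarz with $2\|Av\|\|Bv\| \leq \|Av\|^2+\|Bv\|^2$; but the $\|Av\pm iBv\|^2$ route avoids separately invoking two inequalities and will match the style of the companion uncertainty-type bounds used later.
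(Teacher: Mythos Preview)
Your proof is correct and is essentially identical to the paper's: both expand $\|(A\pm iB)\boldsymbol{v}\|^{2}$ to obtain $\|A\boldsymbol{v}\|^{2}+\|B\boldsymbol{v}\|^{2}\pm\langle i[A,B]\boldsymbol{v},\boldsymbol{v}\rangle$ and then use non-negativity of the norm to conclude. Your added remark that $i[A,B]$ is Hermitian (so the inner product is real) is a nice clarification that the paper leaves implicit.
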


\begin{proof}
We  first calculate
\begin{equation*}
\left\Vert \left(A\mp iB\right)\boldsymbol{v}\right\Vert ^{2}=\left\Vert A\boldsymbol{v}\right\Vert ^{2}+\left\Vert B\boldsymbol{v}\right\Vert ^{2}\mp\left\langle i\left[A,B\right]\boldsymbol{v},\boldsymbol{v}\right\rangle 
\end{equation*}
so we have 
\begin{equation*}
\left\Vert A\boldsymbol{v}\right\Vert ^{2}+\left\Vert B\boldsymbol{v}\right\Vert ^{2}=\left\Vert \left(A\mp iB\right)\boldsymbol{v}\right\Vert ^{2}\pm\left\langle i\left[A,B\right]\boldsymbol{v},\boldsymbol{v}\right\rangle .
\end{equation*}
An easy consequence is then 
\begin{equation*}
\left\Vert A\boldsymbol{v}\right\Vert ^{2}+\left\Vert B\boldsymbol{v}\right\Vert ^{2}\geq\pm\left\langle i\left[A,B\right]\boldsymbol{v},\boldsymbol{v}\right\rangle .
\end{equation*}
\end{proof}

\begin{thm}
For $P$ and $Q$ as above, 
\begin{equation*}
\mu_{\boldsymbol{\lambda}}^{\textup{Q}}(P,Q)=1
\end{equation*}
for all $\boldsymbol{\lambda}$.
\end{thm}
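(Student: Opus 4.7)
The plan is to establish the inequality $\mu_{\boldsymbol{\lambda}}^{\textup{Q}}(P,Q) \geq 1$ via the commutator lower bound from Lemma~\ref{lem:Simple_lower_bound_for_QPS}, and then exhibit a single Schwartz unit vector (a Gaussian coherent state) that saturates this bound.

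For the lower bound, I would apply Lemma~\ref{lem:Simple_lower_bound_for_QPS} to the Hermitian operators $A = P - \lambda_1 I$ and $B = Q - \lambda_2 I$ acting on $\mathcal{S}$. A direct computation on Schwartz functions gives $[P,Q]f = -iD(xf) - x(-iD)f = -if$, so $[A,B] = [P,Q] = -iI$ and hence $i[A,B] = I$. For any unit vector $f \in \mathcal{S}$, the lemma yields
\begin{equation*}
   \|(P-\lambda_1)f\|^2 + \|(Q-\lambda_2)f\|^2 \geq |\langle i[A,B] f, f\rangle| = \|f\|^2 = 1,
\end{equation*}
so $\mu_{\boldsymbol{\lambda}}^{\textup{Q}}(P,Q) \geq 1$ from the characterization of the quadratic pseudospectrum as the infimum of $\sqrt{\|Pf-\lambda_1 f\|^2 + \|Qf - \lambda_2 f\|^2}$.

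For the matching upper bound, I would use the shifted, boosted Gaussian
\begin{equation*}
   f_{\boldsymbol{\lambda}}(x) = \pi^{-1/4} e^{i\lambda_1 x} e^{-(x-\lambda_2)^2/2},
\end{equation*}
which lies in $\mathcal{S}$ and has $\|f_{\boldsymbol{\lambda}}\| = 1$. A short calculation shows $(Q - \lambda_2)f_{\boldsymbol{\lambda}}(x) = (x-\lambda_2)f_{\boldsymbol{\lambda}}(x)$ and $(P-\lambda_1)f_{\boldsymbol{\lambda}}(x) = i(x-\lambda_2)f_{\boldsymbol{\lambda}}(x)$. Substituting $u = x - \lambda_2$ in the resulting Gaussian integrals gives $\|(Q-\lambda_2)f_{\boldsymbol{\lambda}}\|^2 = \|(P-\lambda_1)f_{\boldsymbol{\lambda}}\|^2 = \tfrac{1}{2}$, and the sum equals $1$.

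Combining the two directions shows the infimum in the definition of $\mu_{\boldsymbol{\lambda}}^{\textup{Q}}(P,Q)$ is attained and equals $1$ for every $\boldsymbol{\lambda} \in \mathbb{R}^2$. The only subtle point is ensuring that the domain issue does not cause trouble: since $f_{\boldsymbol{\lambda}} \in \mathcal{S}$ the coherent state is a legitimate test function, and since Lemma~\ref{lem:Simple_lower_bound_for_QPS}'s proof only uses that $[A,B]$ is well-defined on the vectors in play, the commutator identity on $\mathcal{S}$ suffices. No passage to self-adjoint extensions is required.
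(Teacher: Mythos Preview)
Your argument is correct. Both directions go through as you describe: the commutator $[P-\lambda_1,Q-\lambda_2]=[P,Q]=-iI$ feeds directly into Lemma~\ref{lem:Simple_lower_bound_for_QPS} to give the lower bound at every $\boldsymbol{\lambda}$, and the coherent state $f_{\boldsymbol{\lambda}}$ saturates it.

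The paper organizes the proof slightly differently. Rather than working at a general $\boldsymbol{\lambda}$, it first uses the translation unitary $f\mapsto f(\,\cdot\,-\lambda_2)$ and the modulation unitary $f\mapsto e^{i\lambda_1 x}f$ to show that $Q_{\boldsymbol{\lambda}}(P,Q)$ is unitarily equivalent to $Q_{\boldsymbol{0}}(P,Q)$, reducing everything to the single point $\boldsymbol{\lambda}=\boldsymbol{0}$. The lower bound is then Lemma~\ref{lem:Simple_lower_bound_for_QPS} applied with $A=P$, $B=Q$, and the upper bound comes from observing that the Gaussian $g(x)=e^{-x^2/2}$ is an eigenfunction of $P^2+Q^2=Q_{\boldsymbol{0}}(P,Q)$ with eigenvalue $1$. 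Your approach folds the unitary reduction into the choice of test function: your $f_{\boldsymbol{\lambda}}$ is precisely the image of $g$ under those two unitaries, and your direct variance computation replaces the eigenvalue argument. The net effect is a shorter proof that avoids the separate constancy step, at the cost of doing the Gaussian integral explicitly rather than recognizing the harmonic-oscillator ground state.
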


\begin{proof}
We start by establishing that $\mu_{\boldsymbol{\lambda}}^\textup{Q}(P,Q)$
is constant. The shift operator $f(x)\mapsto f(x-\lambda_{2})$ gives a
unitary $U$ that takes $\mathcal{S}$ onto $\mathcal{S}$.  Since
\begin{align*}
U^{*}PU(f(x)) 
 & =-i U^{*}(f'(x-\lambda_{2}))\\
 & =-i f'(x)\\
 & =P(f(x))
\end{align*}
and
\begin{align*}
U^{*}QU(f(x)) & =U^{*}(xf(x-\lambda_{2}))\\
 & =(x+\lambda_{2})f(x)
\end{align*}
we find
$U^{*}PU=P$ and $U^{*}QU=Q+\lambda_{2}$.
Therefore $Q_{(\lambda_1,\lambda_2)}(P,Q)$
is unitarily equivalent to
$Q_{(\lambda_1,0)}(P,Q)$ and so
\begin{equation*}
\mu_{(\lambda_{1},\lambda_{2})}^\textup{Q}(P,Q)=\mu_{(\lambda_{1},0)}^\textup{Q}(P,Q).
\end{equation*}
Likewise, we can use $f(x)\mapsto e^{2\pi i\lambda_1 x}f(x)$ so set up
a unitary that shows 
\begin{equation*}
\mu_{(\lambda_{1},\lambda_{2})}^\textup{Q}(P,Q)=\mu_{(0,\lambda_{2})}^\textup{Q}(P,Q).
\end{equation*}
Taken together, these facts prove that the quadratic 
pseudospectrum is constant.

Since $\left[P,Q\right](f)=-i f$,
 Lemma~\ref{lem:Simple_lower_bound_for_QPS} says 
\begin{equation*}
\left\Vert Pf\right\Vert ^{2}+\left\Vert Qf\right\Vert ^{2}\geq 1.
\end{equation*}
Therefore $\mu_{(\boldsymbol{0})}^{\textup{Q}}(P,Q)\geq 1$ and so 
\begin{equation*}
\mu_{\boldsymbol{\lambda}}^{\textup{Q}}(P,Q)\geq 1
\end{equation*}

Now consider
$g(x)=e^{-\frac{1}{2}x^{2}}$.
Clearly 
\begin{equation*}
Q^{2}(g)(x)=x^{2}e^{-\frac{1}{2}x^{2}}=x^{2}g(x).
\end{equation*}
We also find 
\begin{align*}
P^{2}(g)(x) & =P\left(ixe^{-\frac{1}{2}x^{2}}\right)\\
 & =-i\frac{d}{dx}\left(ixe^{-\frac{1}{2}x^{2}}\right)\\
 & =\frac{d}{dx}\left(xe^{-\frac{1}{2}x^{2}}\right)\\
 & = e^{-\frac{1}{2}x^{2}}-x^{2}e^{-\frac{1}{2}x^{2}}\\
 & = g(x)-x^{2}g(x)
\end{align*}
and so find
\begin{equation*}
\left(P^{2}+Q^{2}\right)g= g.
\end{equation*}
Having found an eigenvector $Q_{(0,0)}(P,Q)=P^{2}+Q^{2}$ we have proven
\begin{equation*}
\mu_{\boldsymbol{\lambda}}^{\textup{Q}}(P,Q) \leq 1.
\end{equation*}
\end{proof}

\section*{Acknowledgments}
A.C.\ and V.L.\ acknowledge support from the Laboratory Directed Research and Development program at Sandia National Laboratories. 
T.A.L.\ acknowledges support from the National Science Foundation, Grant No. DMS-2110398. 
This work was performed in part at the Center for Integrated Nanotechnologies, an Office of Science User Facility operated for the U.S. Department of Energy (DOE) Office of Science.
Sandia National Laboratories is a multimission laboratory managed and operated by National Technology \& Engineering Solutions of Sandia, LLC, a wholly owned subsidiary of Honeywell International, Inc., for the U.S. DOE's National Nuclear Security Administration under Contract No. DE-NA-0003525. 
The views expressed in the article do not necessarily represent the views of the U.S. DOE or the United States Government.

\appendix

\newpage

\section{Calculations for Example~\ref{ex:Three_Toeplitz_type}}

\label{sec:Finish_Example_b_1}

Here if the rest of the proof for the case $b=1$.  We can solve for $x$ in terms of $z$ on the $e=0$ curve, so
\begin{equation*}
x=\left(z^{4}-3z^{3}+4z^{2}-3z+1\right)^{1/4}.
\end{equation*}
which simplifies to 
\begin{equation*}
x=\left|z-1\right|^{\frac{1}{2}}\left(z^{2}-z+1\right)^{1/4}.
\end{equation*}
This substitution into $f(x,z)$ where,
\begin{equation*}
f(x,z)=x^{2}z-x(z-1)+\left(z-1\right)\left(z^{2}-z+1\right),
\end{equation*}
leads to the one-variable function 
\begin{equation*}
f_{e}(z)=z\left|z-1\right|\left(z^{2}-z+1\right)^{1/2}-(z-1)\left|z-1\right|^{\frac{1}{2}}\left(z^{2}-z+1\right)^{1/4}+\left(z-1\right)\left(z^{2}-z+1\right)
\end{equation*}

For $z\leq1$ this becomes
\begin{equation*}
f_{e}(z)=z\left(1-z\right)\left(z^{2}-z+1\right)^{1/2}+(1-z)\left(1-z\right)^{\frac{1}{2}}\left(z^{2}-z+1\right)^{1/4}-\left(1-z\right)\left(z^{2}-z+1\right).
\end{equation*}
For $0\leq z\leq1$ we have $z^{2}-z+1\leq1$ so
\begin{align*}
f_{e}(z) & \geq z\left(1-z\right)\left(z^{2}-z+1\right)^{1/2}+(1-z)\left(1-z\right)^{\frac{1}{2}}\left(z^{2}-z+1\right)^{1/2}-\left(1-z\right)\left(z^{2}-z+1\right)^{\frac{1}{2}}\\
 & =\left(z(1-z)+(1-z)(1-z)^{\frac{1}{2}}-\left(1-z\right)\right)\left(z^{2}-z+1\right)^{\frac{1}{2}}\\
 & =(1-z)\left(z+(1-z)^{\frac{1}{2}}-(1-z)\right)\left(z^{2}-z+1\right)^{\frac{1}{2}}\\
 & =(1-z)\left(2z-1+(1-z)^{\frac{1}{2}}\right)\left(z^{2}-z+1\right)^{\frac{1}{2}}\\
 & \geq0.
\end{align*}
For $z\leq0$ we have $z^{2}-z+1\geq1$ so
\begin{align*}
f_{e}(z) & \leq z\left(1-z\right)\left(z^{2}-z+1\right)^{1/4}+(1-z)\left(1-z\right)^{\frac{1}{2}}\left(z^{2}-z+1\right)^{1/4}-\left(1-z\right)\left(z^{2}-z+1\right)\\
 & =(1-z)\left(z\left(z^{2}-z+1\right)^{1/4}+\left(1-z\right)^{\frac{1}{2}}\left(z^{2}-z+1\right)^{1/4}-\left(z^{2}-z+1\right)\right)\\
 & =(1-z)\left(z^{2}-z+1\right)^{1/4}\left(z+\left(1-z\right)^{\frac{1}{2}}-\left(z^{2}-z+1\right)^{\frac{3}{4}}\right)
\end{align*}
One can see this is at most zero by looking at its derivative.

For $z\geq1$ the formula for $f_{e}$ becomes
\begin{equation*}
f_{e}(z)=z\left(z-1\right)\left(z^{2}-z+1\right)^{1/2}-(z-1)\left(z-1\right)^{\frac{1}{2}}\left(z^{2}-z+1\right)^{1/4}+\left(z-1\right)\left(z^{2}-z+1\right).
\end{equation*}
Here $z^{2}-z+1\geq1$ so
\begin{align*}
f_{e}(z) & =\left(z-1\right)\left(z\left(z^{2}-z+1\right)^{1/2}-\left(z-1\right)^{\frac{1}{2}}\left(z^{2}-z+1\right)^{1/4}+\left(z^{2}-z+1\right)\right)\\
 & \geq\left(z-1\right)\left(z\left(z^{2}-z+1\right)^{1/2}-\left(z-1\right)^{\frac{1}{2}}\left(z^{2}-z+1\right)^{1/2}+\left(z^{2}-z+1\right)^{\frac{1}{2}}\right)\\
 & =\left(z-1\right)\left(z^{2}-z+1\right)^{1/2}\left(z+1-\left(z-1\right)^{\frac{1}{2}}\right)\\
 & \geq0.
\end{align*}

\end{document}